\newcommand{\B}{{\mathbb B}}
\newcommand{\C}{{\mathbb C}}
\newcommand{\N}{{\mathbb N}}
\newcommand{\R}{{\mathbb R}}
\newcommand{\cD}{{\mathcal D}}
\newcommand{\cE}{{\mathcal E}}
\newcommand{\cH}{{\mathcal H}}
\newcommand{\cL}{{\mathcal L}}
\newcommand{\cP}{{\mathcal P}}
\newcommand{\cQ}{{\mathcal Q}}
\newcommand{\cS}{{\mathcal S}}
\newcommand{\ga}{\alpha}
\newcommand{\gb}{\beta}
\renewcommand{\gg}{\gamma}
\newcommand{\gG}{\Gamma}
\newcommand{\gd}{\delta}
\newcommand{\gD}{\Delta}
\newcommand{\gve}{\varepsilon}
\newcommand{\gl}{\lambda}
\newcommand{\go}{\omega}
\newcommand{\gt}{\theta}
\newcommand{\gs}{\sigma}
\newcommand{\Proof}[1]{{\em Proof}. #1~\hfill$\Box$\medskip}
\newcommand{\skp}[1]{\langle#1\rangle}
\renewcommand{\Re}{\mathop{\rm Re}}
\newtheorem{thm}{Theorem}[section]
\newtheorem{proposition}[thm]{Proposition}
\newtheorem{corollary}[thm]{Corollary}
\newtheorem{definition}[thm]{Definition}
\newtheorem{theorem}[thm]{Theorem}
\newtheorem{lemma}[thm]{Lemma}
\newtheorem{remark}[thm]{Remark}
\begin{document}
\title[The Cahn-Hilliard Equation  
on Manifolds with Conical Singularities]%
{The Cahn-Hilliard Equation \\and the Allen-Cahn Equation\\on Manifolds with Conical Singularities}
\author{Nikolaos Roidos}
\author{Elmar Schrohe}
\address{Institut für Analysis, Leibniz Universität Hannover, Welfengarten 1, 
30167 Hannover, Germany}
\email{roidos@math.uni-hannover.de, schrohe@math.uni-hannover.de}

\begin{abstract}We consider the Cahn-Hilliard equation on a manifold with conical 
singularities. We first show the existence of bounded imaginary powers for suitable 
closed extensions of the bilaplacian.
Combining results and methods from singular analysis with a theorem 
of Clément and Li we then prove the short time solvability of 
the Cahn-Hilliard equation in $L_p$-Mellin-Sobolev spaces and obtain the asymptotics
of the solution near the conical points. 

We deduce, in particular,  that regularity is preserved on the smooth part of 
the manifold and singularities remain confined to the conical points.   

We finally show how the Allen-Cahn equation can be treated by simpler considerations.
Again we obtain short time solvability and the behavior near the conical points.
\end{abstract}
\subjclass[2000]{35J70,35R05,58J40}
\date{}

\maketitle

\section{Introduction}
The Cahn-Hilliard equation is a phase-field or diffuse interface equation 
which is mainly used to model phase separation of a binary mixture, e.g.~a 
two-component alloy, but many other applications are encountered.

In the literature, one finds the equation stated in various forms. 
We shall consider here the version
\begin{eqnarray}\label{CH}\label{e1a}
\partial_t u(t)+\Delta^{2}u(t)+\Delta\big(u(t)-u^{3}(t)\big)&=&0, \quad  t\in(0,T);
\\ 
u(0)&=&u_{0},\label{e1b}
\end{eqnarray}
where $u$ models the concentration difference of the components.
The sets where $u=\pm1$ correspond to domains of pure phases.
The existence of solutions -- even global existence -- is not an issue since 
the work of Elliott and Zheng Songmu \cite{ElliottZhengSongmu86} in 1986
and Caffarelli and Muler \cite{CaffarelliMuler95} in 1995. 
Our main point of interest is to clarify to what extent the singularities
of the underlying space -- here a manifold with conical points -- 
are reflected in a short time solution of the equation.

As usual, we model a manifold with conical singularities by a manifold 
with boundary $\B$ of dimension $n+1$, $n\ge1$, 
endowed with a conically degenerate Riemannian metric. 
On one hand, working on a manifold with boundary simplifies the analysis; on the 
other hand, the degeneracy of the Riemannian metric entails that 
geometric operators such as the Laplacian show the typical degeneracy they
have on spaces with conic points in Euclidean space.

We measure smoothness in terms of weighted Mellin-Sobolev spaces 
$\cH^{s,\gg}_p(\B)$. Here $s$ is a smoothness index, $\gamma$ a weight, and
$1<p<\infty$. 
They coincide with the usual $L^p$-Sobolev spaces away from the singularities.
Close to a conical point, in coordinates $(x,y)$, where $x$ is the distance
to the tip and $y$ a tangential variable, one captures differentiability in 
terms of the operators $x\partial_x$ and $\partial_y$. For $s=0$ we obtain
an $L^p$-space with weight $x^{(\frac{n+1}2-\gg)p-1}$.
It will serve as the base space for our considerations.  

One of the essential points then is to understand the linearized equation, in
particular, the bilaplacian $\Delta^2$ which is the leading order contribution.

As this is a conically degenerate differential operator, 
a first issue is the choice of a suitable closed extension.
Brüning and Seeley \cite{BrueningSeeley88} first noticed that there is 
no canonical choice of a closed extension for such operators.
In general one has a family of closed extensions. 
The domains of the minimal and the maximal extension
differ by a finite-dimensional space of 
functions which are smooth in the tangential variable $y$ and have 
certain asymptotics in $x$ as $x\to0^+$,
see Lesch  \cite{Le} or Schrohe and Seiler \cite{Sh} for more details.

We base our analysis here on that of the Laplacian and choose the 
domain of  $\Delta^2$ accordingly. The closed extensions of the 
Laplacian have been studied in \cite{Sh}. 
Some basic facts are recalled, below.  
We deduce, in particular, that there exist extensions $\underline\Delta$ 
of the Laplacian for which $c-\underline \Delta$ has bounded imaginary
powers on our weighted $L^p$-space for suitably large $c>0$. 
We next show that the corresponding result is true for the bilaplacian
on an appropriately chosen domain which we determine explicitly
in Proposition \ref{dom}. Generically, it is a direct sum of the space
$\cH^{4,4+\gg}_p(\B)$, which is a weighted space of functions
belonging to $H^4_{p,loc}(\B^\circ)$ over the interior $\B^\circ$ of $\B$, 
and a finite-dimensional space of functions with asymptotics near
$x= 0$ as mentioned above. 
Here, the occurring asymptotics types are of the form 
$x^{-q}$ or $x^{-q}\log x$, 
where the exponents $q$ can be determined 
from the spectrum of the Laplace-Beltrami operator $\gD_\partial$
induced by $\gD$ on the cross-section of the cone.
 
Our argument then relies on the notion of maximal regularity:
Let  $X_1\hookrightarrow X_0$ be Banach spaces and let 
$B:\cD(B)=X_1\to X_0$ be a closed  densely defined linear operator. 
Assume that $-B$ generates an analytic semigroup.
Then the operator $B$  is said to have maximal regularity 
for the pair $(X_1,X_0)$ and $1<q<\infty$, if for every $v_0$ in the interpolation
space  $X_q=(X_0,X_1)_{1-1/q,q}$ and every $g\in L^q(0,T;X_0)$
there exists a unique solution 
$v\in L^q(0,T;X_1)\cap W^1_q(0,T;X_0)\cap C([0,T];X_q)$ of the equation 
\begin{eqnarray}\label{e0.2}
\dot v + Bv=g,\ t\in(0,T);\quad v(0)=v_0,
\end{eqnarray}
depending continuously on the data $v_0$ and $g$.

It was proven by Dore and Venni, see Theorem 3.2 in  \cite{DV},
that essentially the existence of bounded imaginary powers for $B$ implies 
maximal regularity, 
see Theorem \ref{DoreVenni} for details. 
Replacing $v$ by $e^{ct}v$, 
it is even sufficient to show that 
$c+B$ has bounded imaginary powers for large positive $c$.

A theorem by Clément and Li shows how maximal regularity can be used to 
establish short time existence of solutions to quasilinear equations of the form  
\begin{eqnarray}\label{e0.1}
\partial_t u(t) + A(u(t))u(t)=f(t,u(t))+g(t), \ t\in(0,T_0);\quad u(0)=u_0
\end{eqnarray}
in 
$X_0$ with domain $\cD(A(u(t)))=X_1$, where $T_0>0$.  
\begin{theorem}\label{CL} {\rm (Clément and Li, \cite{CL}, Theorem 2.1) }
Assume that  there exists an open neighborhood $U$ of 
$u_0$ in $X_q$ such that $A(u_0)$ has maximal regularity for 
$(X_1,X_0)$ and $q$, and that  
\begin{itemize}
\item[(H1)] $A\in C^{1-}(U, \cL(X_1,X_0))$,    
\item[(H2)] $f\in C^{1-,1-}([0,T_0]\times U, X_0)$,
\item[(H3)] $g\in L^q([0,T_0], X_0)$.
\end{itemize}
Then there exists a $T>0$ and a unique 
$u\in L^q(0,T;X_1)\cap W^1_q(0,T;X_0)\cap C([0,T];X_q)$ 
solving the equation \eqref{e0.1} on $]0,T[$.
\end{theorem}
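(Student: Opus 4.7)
The plan is to apply the Banach fixed point theorem on a short time interval, using the maximal regularity of the reference operator $A(u_0)$ to invert the linearization. Set $E_T := L^q(0,T;X_1)\cap W^1_q(0,T;X_0)$ with its natural norm. Since $A(u_0)$ has maximal regularity for $(X_1,X_0)$ and $q$, for every $h \in L^q(0,T_0;X_0)$ and every $w_0 \in X_q$ the linear Cauchy problem $\partial_t v + A(u_0)v = h$, $v(0)=w_0$, admits a unique solution $v = S(h,w_0)$ in $E_{T_0}\cap C([0,T_0];X_q)$, depending continuously on the data. I shall use the trace embedding $E_T\hookrightarrow C([0,T];X_q)$ and, crucially, the fact that any path $u\in E_{T_0}$ with $u(0)=u_0$ satisfies $\sup_{t\in[0,T]}\|u(t)-u_0\|_{X_q}\to 0$ as $T\to 0^+$.

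Rewriting \eqref{e0.1} as
$$\partial_t u + A(u_0)u = [A(u_0)-A(u)]u + f(t,u) + g, \quad u(0)=u_0,$$
the problem becomes a fixed point equation $u = \Psi(u)$ with
$$\Psi(u) := S\bigl([A(u_0)-A(u)]u + f(\cdot,u) + g,\ u_0\bigr).$$
Let $u^* := S(f(\cdot,u_0)+g,\,u_0)\in E_{T_0}$ be a natural reference path and consider
$$M_{T,r}:=\{u\in E_T : u(0)=u_0,\ \|u-u^*\|_{E_T}\le r\}.$$
By the preceding observation, $M_{T,r}$ is contained in $\{u : u(t)\in U \text{ for all } t\in[0,T]\}$ once $T$ and $r$ are small enough, so that hypotheses (H1) and (H2) apply along the entire orbit.

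For the contraction estimate, I apply $\Psi$ to $u,\tilde u\in M_{T,r}$ and, via maximal regularity, control $\|\Psi(u)-\Psi(\tilde u)\|_{E_T}$ by the $L^q(0,T;X_0)$-norm of the difference of the arguments. Splitting
$$[A(u_0)-A(u)]u - [A(u_0)-A(\tilde u)]\tilde u = [A(\tilde u)-A(u)]u + [A(u_0)-A(\tilde u)](u-\tilde u),$$
condition (H1) bounds the first summand in $L^q(0,T;X_0)$ by $C\|u-\tilde u\|_{C([0,T];X_q)}\|u\|_{L^q(0,T;X_1)}$, and the second by $C\,\sup_t\|u_0-\tilde u(t)\|_{X_q}\|u-\tilde u\|_{L^q(0,T;X_1)}$; by continuity at $t=0$, both prefactors vanish as $T\to 0^+$. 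An analogous, easier estimate using (H2) and Hölder in time handles the $f$-contribution. Combined, $\Psi$ is a contraction on $M_{T,r}$ for $T$ small, and its unique fixed point is the sought solution.

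The main obstacle is the quasilinear term $[A(u_0)-A(u)]u$: one must extract smallness from the $C([0,T];X_q)$-factor without giving up the $L^q(0,T;X_1)$-integrability of $u$. This is exactly where the trace embedding $E_T\hookrightarrow C([0,T];X_q)$ and the Hölder hypothesis (H1) have to be combined carefully, exploiting that the embedding constant and the modulus of continuity at $t=0$ work in our favour as $T$ shrinks. Everything else, namely the $f$-term via (H2) and the inhomogeneity via (H3), fits into the standard Dore-Venni framework already invoked in the introduction.
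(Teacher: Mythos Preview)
The paper does not supply a proof of this theorem: it is quoted verbatim from Cl\'ement and Li \cite{CL}, Theorem~2.1, and used as a black box. So there is no proof in the paper to compare against.

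That said, your sketch follows precisely the strategy of the original Cl\'ement--Li argument: freeze the coefficients at $u_0$, invoke maximal regularity to obtain a solution operator for the linear problem, and close a contraction on a ball in $E_T$ centred at the reference path $u^*$. The key technical points you flag are the right ones. Two items you pass over a bit quickly are worth naming explicitly. First, the maximal regularity estimate and the trace embedding $E_T\hookrightarrow C([0,T];X_q)$ must come with constants that are \emph{uniform} in $T\in(0,T_0]$; for the trace this is only true on the subspace of paths with vanishing initial value, which is why one works with differences $u-u^*$ and $u-\tilde u$, all of which start at zero. Second, you treat the contraction but not the self-mapping $\Psi(M_{T,r})\subseteq M_{T,r}$; this requires an estimate of $\|\Psi(u)-u^*\|_{E_T}$, which is of the same flavour but needs the observation that $\|u^*\|_{L^q(0,T;X_1)}\to 0$ as $T\to 0$ and that $\|f(\cdot,u)-f(\cdot,u_0)\|_{L^q(0,T;X_0)}$ is small by (H2). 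With these two points made precise your outline becomes a complete proof, and it is the standard one.
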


From Theorem \ref{CL} we deduce the short time existence 
of solutions to the Cahn-Hilliard equation.  In our case, the space 
$X_0$ is the weighted $L_p$-space $\cH_p^{0,\gg}(\B)$, 
while $X_1$ is the domain of the bilaplacian. 
The description of $u$ then provides  
information on the regularity of $u$ and its asymptotics near the conical point.
Note that measuring regularity in standard Sobolev spaces is not possible as
our manifold is not even $C^1$-smooth.

We finally turn to the Allen-Cahn equation, a semilinear heat equation of the form
\begin{eqnarray}\label{AC}\label{AC1}
\partial_t u(t)-\Delta u(t)&=&f(u), \quad  t\in(0,T);
\\ 
u(0)&=&u_{0}.\label{AC2}
\end{eqnarray}
Here, $f:\R\to \R$ is a Lipschitz continuous function, which is usually assumed
to be of the form $f=F'$ where $F$ has a double well structure (a fact 
not needed for our arguments). 
For the extensions 
$\underline \gD$ of the Laplacian determined above we immediately  obtain
the existence of a short time solution from Theorem  \ref{CL}. 
Again, the description of the domain provides some asymptotic information.  
 
As both, the Cahn-Hilliard equation and the Allen-Cahn equation are only 
semilinear, we might have relied on a more elementary approach. 
The present setting, however, is rather elegant and allows  us to 
make use of earlier work by Coriasco, Schrohe, and Seiler  \cite{CSS2}.
Moreover, the results for the bilaplacian on 
conic manifolds which we derive here will be useful later on.

This article is structured as follows: In Section 2 we first introduce the
weighted $L^p$ Mellin-Sobolev spaces.  
We next recall the essential facts about domains and extensions 
of the Laplacian on manifolds with straight conical singularities.
Depending on the dimension we then find suitable extensions $\underline \gD$ 
for which $c-\underline\gD$ has bounded imaginary powers for suitably 
large $c>0$.
Section 3 focuses on the description of the domain of the bilaplacian and the 
proof of maximal regularity for the linear part of the equation. 
In Section 4 we apply the above theorem by Clément and Li.
We find that there is a delicate interplay between the choice of the weight 
(and hence the extension) and the conditions of the theorem.
The choices depend on the dimension.
Of course, the two-dimensional case, where the phases may be 
considered as films on a surface with conical singularities, is of greatest practical interest.  
The Allen-Cahn equation is addressed in Section 5.

\section{Notation and Preliminary Results}
\subsection{Bounded imaginary powers} 
Following Amann \cite{Am}, Sections 4.6 and 4.7, 
we give the following two definitions:

\begin{definition}
Let $X$ be a Banach space, $K\geq1$ and $\theta\in[0,\pi[$. 
We denote by  
$\mathcal{P}(K,\theta)$ the class of all closed, densely defined 
linear operators $A$ in $X$ such that 
\[(1+|z|)\,\|(A+z)^{-1}\|\leq K \quad\text{for all } 
z\in S_{\theta}=
\{z\in\mathbb{C}:|\arg z|\leq\theta\}\cup\{0\}\subset\rho{(-A)} .\] 
In particular,  we let $S_{0}=\mathbb{R}^{+}\cup\{0\}$ and write
$\mathcal{P}(\theta)=\bigcup_{K}\mathcal{P}(K,\theta)$.
\end{definition}

\begin{definition}
Let $X$ be a Banach space, $M\geq1$ and $\phi\geq0$. 
We say that a linear operator $A$ in $X$ has \textrm{bounded imaginary powers
with angle} $\phi$ and write $A\in\mathcal{BIP}(M,\phi)$, provided 
$A\in \bigcup_{\theta}\mathcal{P}(\theta)$, the imaginary powers $A^{it}$
are defined for $t\in\R$, and we have the estimate
$$\|A^{it}\|_{\cL(X)}\leq Me^{\phi|t|},\ t\in\mathbb{R}.$$
We let $\mathcal{BIP}(\phi)=\bigcup_{M}\mathcal{BIP}(M,\phi)$.
\end{definition}

The importance of bounded imaginary powers is illustrated 
by the aforementioned result by Dore and Venni \cite{DV}, Theorem 3.2:

\begin{theorem}\label{DoreVenni}Let $X$ be a $\zeta$-convex Banach space and 
$A\in \cP(0)\cap \mathcal{BIP}(\phi)$ for some $0\le \phi<\frac\pi2$.
Then $A$ has maximal regularity for the pair $(\cD(A), X)$.
\end{theorem}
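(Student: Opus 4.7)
The plan is to derive the theorem from the Dore-Venni sum theorem for operators with bounded imaginary powers, by viewing maximal regularity as the invertibility of $\partial_t + A$ on an $L^q$-space of vector-valued functions on $(0,T)$.

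First I would reduce to the case $v_0 = 0$. Since $A \in \cP(0)$, the operator $-A$ generates a bounded analytic semigroup $e^{-tA}$, and classical trace/interpolation theory identifies $X_q = (X,\cD(A))_{1-1/q,q}$ precisely as the set of initial data for which $t \mapsto e^{-tA}v_0$ lies in $L^q(0,T;\cD(A)) \cap W^1_q(0,T;X)$, with continuous dependence on $v_0$. Setting $w(t) = v(t) - e^{-tA}v_0$ absorbs the initial condition into the right-hand side, so it suffices to solve $\dot w + Aw = h$ with $w(0) = 0$ for arbitrary $h \in L^q(0,T;X)$.

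Next, I would introduce on $\mathbb{X} := L^q(0,T;X)$ two operators: $\mathcal{A}$, acting pointwise by $A$ with $\cD(\mathcal{A}) = L^q(0,T;\cD(A))$, and $\mathcal{B} = \partial_t$ with $\cD(\mathcal{B}) = \{u \in W^1_q(0,T;X) : u(0)=0\}$. Their resolvents commute, and because the imaginary powers of $A$ act pointwise one verifies that $\mathcal{A} \in \mathcal{BIP}(\phi)$ on $\mathbb{X}$ with the same angle $\phi < \pi/2$. The essential vector-valued input is that the above realization of $\partial_t$ belongs to $\mathcal{BIP}(\pi/2)$ on $\mathbb{X}$ whenever $X$ is $\zeta$-convex; this is a consequence of a vector-valued Mikhlin multiplier theorem, whose applicability is in fact equivalent to the UMD property of $X$.

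With these ingredients in place, the core step is the Dore-Venni sum theorem: if two resolvent-commuting closed operators $\mathcal{A}, \mathcal{B}$ on a UMD space satisfy $\mathcal{A} \in \mathcal{BIP}(\theta_A)$, $\mathcal{B} \in \mathcal{BIP}(\theta_B)$ with $\theta_A + \theta_B < \pi$, then $\mathcal{A} + \mathcal{B}$ with domain $\cD(\mathcal{A})\cap \cD(\mathcal{B})$ is closed and boundedly invertible. With $\theta_A = \phi$ and $\theta_B = \pi/2$ the angle condition holds, and one obtains a unique $w = (\mathcal{A} + \mathcal{B})^{-1}h$ with the required regularity. The additional continuity $v \in C([0,T];X_q)$ then follows from the standard trace theorem for the maximal regularity space $L^q(0,T;\cD(A)) \cap W^1_q(0,T;X)$.

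The main obstacle is the Dore-Venni sum theorem itself. Its proof constructs $(\mathcal{A} + \mathcal{B})^{-1}$ via a Mellin-type contour integral
\[
(\mathcal{A}+\mathcal{B})^{-1} = \frac{1}{2\pi i}\int_{c-i\infty}^{c+i\infty} \mathcal{A}^{-z}\mathcal{B}^{z-1}\,\frac{\pi}{\sin \pi z}\,dz,
\]
where one shifts the contour into the vertical strip on which the exponential BIP bounds for $\mathcal{A}^{-z}$ and $\mathcal{B}^{z-1}$, combined with the decay of $1/\sin \pi z$, give absolute convergence precisely under the angle condition $\theta_A + \theta_B < \pi$. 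Checking that the resulting bounded operator actually inverts $\mathcal{A}+\mathcal{B}$ on a dense subspace (and hence everywhere by closedness) is the delicate analytic heart of the argument, and invoking it is where the $\zeta$-convexity hypothesis is indispensable.
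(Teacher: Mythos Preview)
The paper does not prove this theorem; it is merely stated and attributed to Dore and Venni \cite{DV}, Theorem 3.2, with no argument given in the text. Your sketch is a correct outline of precisely the standard Dore--Venni argument: reduce to zero initial data via the semigroup, realize $A$ (acting pointwise) and $\partial_t$ with zero trace as commuting operators in $\mathcal{BIP}$ on the UMD space $L^q(0,T;X)$, and apply the closed-sum theorem under the angle condition $\phi + \pi/2 < \pi$. So there is nothing in the paper to compare against; your approach coincides with that of the cited source.
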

 
\subsection{The Laplacian on Mellin-Sobolev spaces over a manifold with conical singularities}\label{Laplacian}
Let $\B$ be an $n+1$ dimensional smooth compact manifold with 
boundary $\partial\B$. 
We fix a collar neighborhood diffeomorphic to 
$[0,1)\times\partial \B$, where we denote coordinates by $(x,y)$, 
$x\in [0,1)$, $y\in \partial\B$. 

We assume that $\B$ is endowed with a Riemannian metric which, in 
the above neighborhood, takes the degenerate form $g=dx^2+x^2h$, where
$h$ is a Riemannian metric on $\partial \B$.
The associated Laplacian then is a second order cone differential operator.  
It is of the form 
\begin{gather}\label{e111}
\Delta=\frac{1}{x^{2}}\Big((x\partial_{x})^{2}+(n-1)x\partial_{x}+\Delta_{\partial}\Big)
\end{gather}
near the boundary, where $\Delta_{\partial}$ is the Laplacian on 
$\partial\mathbb{B}$ induced by $h$.

By a cut-off function (near $\partial \B$) we mean a smooth non-negative
function $\go$ with $\go\equiv1$ near $\partial \B$  and $\go\equiv0$ 
outside the collar neighborhood  of the boundary. 

\begin{definition}\label{hsgamma}Let $k\in\N_0$, $\gg\in\R$ and 
$1\le p<\infty$. 
By $\cH^{k,\gg}_p(\B)$ we denote the space of all functions $u$ on $\B$
such that for each cut-off function $\go$ we have $(1-\go)u\in H^{k}_p(\B)$
and 
$$x^{\frac{n+1}2-\gamma}(x\partial_x)^j\partial_y^{\alpha}(\go u)(x,y)
\in L^p\left(\frac{dx}xdy\right),\quad j+|\ga|\le k.$$
\end{definition}

There are various ways of extending the definition in order 
to obtain Banach spaces 
$\cH^{s,\gg}_p(\B)$ for all $s\in\R$.
One of the simplest ways, cf.~\cite{CSS1}, is to define the map 
$$\cS_\gamma: C^\infty_c(\R^{n+1}) \to   C^\infty_c(\R^{n+1}),\qquad 
v(t,y) \mapsto e^{(\frac{n+1}2-\gg)t} v(e^{-t},y).$$
Moreover, let $\kappa_{j}:U_{j}\subseteq\partial \B\to\R^n$, $j=1,\ldots,N,$
be a covering of $\partial \B$ by coordinate charts
and $\{\varphi_{j}\}$ 
a subordinate partition of unity. 
Then $\cH^{s,\gg}_p(\B)$ is the space of all distributions 
such that 
    \begin{equation}\label{norm}
	\begin{array}{lcl}
	    \|u\|_{\cH^{s,\gg}_p(\B)} \! & \! = \! & \!
	    \displaystyle
	     \sum_{j=1}^{N}\|\cS_{\gamma}(1\times\kappa_{j})_{*}
	              (\omega\varphi_{j}u)\|_{H^{s}_{p}(\R^{1+n})}+
	             \| (1-\omega)u)\|_{H^{s}_{p}(\B)}
        \end{array}    
    \end{equation}
is defined and finite. Here, $\omega$ is a (fixed) cut-off 
function and $*$ refers to the push-forward of distributions. 
Up to equivalence of norms, this construction is independent of the choice of 
$\go$ and the $\kappa_j$. 
Clearly, $\cH^{0,\gg}_p(\B)$ is a UMD space and 
hence $\zeta$-convex.

\begin{corollary}\label{c1} Let $1\le p<\infty$ and $s>(n+1)/p$. 
Then a function $u$ in 
$\cH^{s,\gg}_p(\B)$ is continuous on $\B^\circ$, and, near $\partial\B$, we have
\begin{eqnarray*}
|u(x,y)|\le c x^{\gg-(n+1)/2} \|u\|_{\cH^{s,\gg}_p(\B)}
\end{eqnarray*}
for a constant $c>0$.
\end{corollary}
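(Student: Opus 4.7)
The plan is to reduce the corollary to the classical Sobolev embedding $H^s_p(\R^{n+1})\hookrightarrow C_b(\R^{n+1})$, valid because $s>(n+1)/p$, by unfolding the definition \eqref{norm} of the Mellin-Sobolev norm. First I would decompose $u=(1-\omega)u+\sum_j\omega\varphi_ju$ using the partition of unity from \eqref{norm}. The summand $(1-\omega)u$ lies in $H^s_p(\B)$, and the classical Sobolev embedding on the smooth compact $(n+1)$-manifold-with-boundary $\B$ yields continuity and a uniform bound in terms of $\|u\|_{\cH^{s,\gg}_p(\B)}$ on all of $\B$, in particular on $\B^\circ$.

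For each chart index $j$, the function $w_j:=\cS_\gg(1\times\kappa_j)_*(\omega\varphi_ju)$ belongs to $H^s_p(\R^{n+1})$ with $\|w_j\|_{H^s_p}\le\|u\|_{\cH^{s,\gg}_p(\B)}$. The Sobolev embedding on $\R^{n+1}$ therefore gives continuity of $w_j$ together with $\|w_j\|_\infty\le C\|u\|_{\cH^{s,\gg}_p(\B)}$. Because the map $\cS_\gg\circ(1\times\kappa_j)_*$ is (up to a smooth nowhere-vanishing multiplier) a diffeomorphism from the part of $\B^\circ$ lying in the support of $\omega\varphi_j$ onto an open subset of $\R^{n+1}$, the continuity of $w_j$ pulls back to continuity of $\omega\varphi_ju$ on $\B^\circ$. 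Summing over $j$ completes the continuity claim.

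For the pointwise estimate, fix $(x_0,y_0)$ close enough to $\partial\B$ that $\omega(x_0)=1$, and choose $j_0$ with $\varphi_{j_0}(y_0)\neq 0$. Evaluating the defining identity for $w_{j_0}$ at $(t_0,y_0)=(-\log x_0,y_0)$, the exponential factor in $\cS_\gg$ converts into the power of $x_0$ matching the multiplier $x^{(n+1)/2-\gg}$ appearing in Definition \ref{hsgamma}; one arrives at a relation of the form
\[
w_{j_0}(t_0,y_0)=x_0^{(n+1)/2-\gg}\,\varphi_{j_0}(y_0)\,u(x_0,y_0).
\]
Solving for $u(x_0,y_0)$, dividing by the positive $\varphi_{j_0}(y_0)$, and applying $\|w_{j_0}\|_\infty\le C\|u\|_{\cH^{s,\gg}_p(\B)}$ yields precisely $|u(x_0,y_0)|\le c\,x_0^{\gg-(n+1)/2}\|u\|_{\cH^{s,\gg}_p(\B)}$. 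The only genuinely delicate point is the careful bookkeeping of the weight exponent through the logarithmic change of variable $t=-\log x$ built into the Mellin-Sobolev framework; no further analytic machinery is required.
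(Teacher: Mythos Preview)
Your argument is correct and in fact takes a more direct route than the paper's. The paper obtains the pointwise bound near $\partial\B$ by first applying the trace theorem to the functions $w_j\in H^s_p(\R^{n+1})$, which controls $\|u(e^{-t},\cdot)\|_{B^{s-1/p}_{p,p}(\partial\B)}$ uniformly in $t$, and then invokes a Besov--Sobolev embedding on the cross-section $\partial\B$. You bypass this two-step procedure by applying the full $(n{+}1)$-dimensional embedding $H^s_p(\R^{n+1})\hookrightarrow C_b(\R^{n+1})$ directly to each $w_j$ and reading off the pointwise value. This is more elementary---no Besov spaces or trace theorem are needed---and yields the same conclusion.

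There is one point to tighten. When you divide by $\varphi_{j_0}(y_0)$, you need this quantity bounded below uniformly in $y_0$ in order to obtain a constant $c$ independent of the point; merely requiring $\varphi_{j_0}(y_0)\neq 0$ does not ensure this. The cleanest fix is to avoid the division altogether: since $\omega(x_0)=1$ and $\sum_j\varphi_j\equiv 1$, sum the identities $\varphi_j(y_0)\,u(x_0,y_0)=x_0^{\gg-(n+1)/2}\,w_j(t_0,\kappa_j(y_0))$ over those $j$ with $y_0\in U_j$ to get
\[
|u(x_0,y_0)|\le x_0^{\gg-(n+1)/2}\sum_j\|w_j\|_\infty\le c\,x_0^{\gg-(n+1)/2}\|u\|_{\cH^{s,\gg}_p(\B)}.
\]
Alternatively, choose $j_0$ so that $\varphi_{j_0}(y_0)=\max_j\varphi_j(y_0)\ge 1/N$.
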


\Proof{Continuity on $\B^\circ$ 
follows from the usual Sobolev embedding theorem, 
noting that $\cH^{s,\gg}_p(\B)\hookrightarrow H^s_{p,loc}(\B^\circ)$.
Near the boundary, we deduce from \eqref{norm} and the trace 
theorem that for each $t\in\R$, 
$$e^{((n+1)/2-\gg)t}\|u(e^{-t},\cdot)\|_{B^{s-1/p}_{p,p}(\partial\B)}
\le c \|u\|_{\cH^{s,\gamma}_p(\B)}.
$$
Letting $x=e^{-t}$ we then 
obtain the assertion from the fact that the Besov space 
$B^{s-1/p}_{p,p}(\partial\B)$ embeds into the Sobolev space
$H_p^{s-1/p-\gve}(\partial\B)$ for every $\gve>0$ and the 
Sobolev embedding theorem. }

\subsection{Closed extensions of the Laplace operator}
As pointed out in the introduction,  the choice of a suitable closed extension
of $\Delta$ on $\cH^{0,\gg}_p(\B)$, $1<p<\infty$,  is of central importance. 
For the convenience of the reader we recall here the basic facts following 
\cite{Sh}, where more details can be found.

In the analysis of conically degenerate (pseudo-)differential operators, 
the so-called conormal symbol plays an important role, see e.g. 
Schulze \cite{s94} for an exhaustive treatment.  
Also here, the first step is the analysis of the conormal symbol $\gs_M(\Delta)$ 
of $\Delta$, i.e.  the operator-valued function 
$$\sigma_M(\Delta):\C\to \cL(H^s_p(\partial \B), H^{s-2}_p(\partial \B))\ \ 
\text{given by}\quad
\sigma_M(\Delta)(z)=z^2-(n-1)z+\Delta_\partial.$$
We are interested in the values of $z$ where $\gs_M(\Delta)$ is not invertible.
For this, the precise choice of $s$ and $p$ is not essential.
We denote by
 $0=\lambda_0>\lambda_1>\ldots$ the eigenvalues of $\Delta_\partial$ and by
 $E_0,\,E_1,\ldots$ the corresponding eigenspaces. 
 Moreover, let  $\pi_j\in\cL(L_2(\partial\B))$ be the orthogonal projection 
 onto $E_j$; it  extends to $L^p(\partial\B)$ for $1<p<\infty$: For an 
 $L^2$-orthonormal basis $\{e_{j1},\ldots,e_{jm}\}$ of $E_j$ we let 
 $\pi_j(v) = \sum_{k=1}^m \skp{v,e_{jk}}e_{jk}$.

The {\em non}-bijectivity points of $\sigma_M(\Delta)$ are the 
points $z=q_j^+$ and $z=q_j^-$ with 
  \begin{equation}\label{pjpm}
   q_j^\pm=\mbox{$\frac{n-1}{2}\pm
   \sqrt{\big(\frac{n-1}{2}\big)^2-\lambda_j}$},
   \qquad j\in\N_0.
  \end{equation}
 Note the symmetry $q_j^+=(n-1)-q_j^-$. 
 It is straightforward to see that 
\begin{eqnarray}\label{inverse}
(z^2-(n-1)z+\Delta_\partial)^{-1}=
    \sum_{j=0}^\infty\frac{1}{(z-q_j^+)(z-q_j^-)}\pi_j.
\end{eqnarray}
In fact, this is a pseudodifferential operator which clearly is inverse to 
$\gs_M(\gD)(z)$ on $L^2(\partial\B)$. 
Thus it also is the inverse on $H^s_p(\partial\B)$ for arbitrary $s$ and 
$1<p<\infty$, since the span of the eigenfunctions of $\gD_\partial$ is
dense in these spaces.

Hence, in case $\text{\rm dim}\,\B\not=2$, where the $q^\pm_j$ are all different, 
the inverse to $\gs_M(\Delta)$ has only simple poles in the points $q^\pm_j$.
For $\text{\rm dim}\,\B=2$ the poles at $q_j^\pm$, $j\not=0$, are simple, 
while there is a double pole at $q_0^+=q_0^-=0$.

With $q_j^\pm$, $j\not=0$, we associate the function spaces 
  $$\cE_{q_j^\pm}=\omega\,x^{-q_j^\pm}\otimes E_j=
    \{\omega(x)\,x^{-q_j^\pm}\,e(y): e\in E_j\},\ j\in\N.
   $$
For $j=0$ we let 
\begin{equation}\label{ep0pm}
\cE_{q_0^\pm}=
    \begin{cases}
     \go\otimes E_0+\go\log x\otimes E_0, & 
      \text{\rm dim}\,\B=2\\
     \omega\,x^{q_0^\pm} \otimes E_0,& \text{\rm dim}\,\B\not=2
    \end{cases}.
  \end{equation}
 For later use note that $\gD$ maps the spaces $\cE_{q_j^\pm}$ to 
 $C^\infty_c(\B^\circ)$.  
  
 Furthermore, we introduce the sets $I_\gg$, $\gg\in\R$, by 
  $$I_\gamma=\{q_j^\pm: j\in\N_0\}\cap
    \,\mbox{$]\frac{n+1}{2}-\gamma-2,\frac{n+1}{2}-\gamma[$}.$$

The following is Proposition 5.1 in \cite{Sh}:
 
 \begin{proposition}\label{maxmin}
  The domain of the maximal extension of $\Delta$ in
  $\cH^{0,\gamma}_p(\B)$ is 
   $$\cD(\Delta_{\max})=\cD(\Delta_{\min})\oplus
     \bigoplus_{q_j^\pm\in I_\gamma}\cE_{q_j^\pm}.$$
  In case $q_j^\pm\not=\frac{n+1}{2}-\gamma-2$ for all $j$, the minimal 
  domain is $\cD(\Delta_{\min})=\cH^{2,2+\gamma}_p(\B)$. 
 \end{proposition}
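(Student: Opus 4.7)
The argument combines Mellin analysis near the conical boundary with a cone parametrix for $\gD$, split into three parts that follow the structure of the statement.

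\emph{Both summands lie in $\cD(\gD_{\max})$.} From \eqref{e111} one checks directly that $\gD:\cH^{2,2+\gg}_p(\B)\to\cH^{0,\gg}_p(\B)$ is bounded, the $x^{-2}$ factor being compensated by the two-unit weight shift; combined with the density of $C^\infty_c(\B^\circ)$ in $\cH^{2,2+\gg}_p(\B)$ this gives $\cH^{2,2+\gg}_p(\B)\subset\cD(\gD_{\min})$. For $u=\go\,x^{-q_j^\pm}e(y)\in\cE_{q_j^\pm}$ with $e\in E_j$ and $q_j^\pm\in I_\gg$, the range defining $I_\gg$ guarantees $u\in\cH^{0,\gg}_p(\B)$, while \eqref{e111} together with the identity $\gs_M(\gD)(q_j^\pm)e=0$ shows that $\gD u\in C^\infty_c(\B^\circ)\subset\cH^{0,\gg}_p(\B)$. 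The analogous verification at the double zero $q_0^+=q_0^-=0$ in dimension two produces the $\log x$ factor of \eqref{ep0pm}.

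\emph{The reverse inclusion.} Given $u\in\cD(\gD_{\max})$, localize by a cut-off $\go$ near $\partial\B$ and Mellin transform in $x$. The equation $\gD u=f$ reduces, modulo commutator terms supported away from $x=0$, to $\gs_M(\gD)(z)\,\widetilde{\go u}(z,\cdot)=\widetilde{x^2\go f}(z,\cdot)$. Using \eqref{inverse} one inverts and recovers $\go u$ as a Mellin integral along the line $\Re z=\frac{n+1}{2}-\gg$, which is the line determined by $u\in\cH^{0,\gg}_p$. Shifting the contour to $\Re z=\frac{n+1}{2}-\gg-2$, the line corresponding to $\cH^{2,2+\gg}_p$-membership, crosses exactly the non-bijectivity points of $\gs_M(\gD)$ in $I_\gg$. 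The residue theorem yields, at each simple pole, a contribution of the form $\go\,x^{-q_j^\pm}\pi_j e\in\cE_{q_j^\pm}$; in dimension two the double pole at $z=0$ produces a residue $\go\,(a+b\log x)\otimes E_0$, accounting for the special case in \eqref{ep0pm}. The remaining integral along the deeper contour belongs to $\cH^{2,2+\gg}_p(\B)$, and directness of the sum follows from the distinct prescribed decay rates of the asymptotic terms, which cannot be absorbed into the regular part.

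\emph{Minimal domain under the non-resonance condition.} If $q_j^\pm\ne\frac{n+1}{2}-\gg-2$ for every $j$, then \eqref{inverse} is holomorphic on the shifted line and a standard cone parametrix of $\gD$ produces $P$ with $P\gD=I+R$, where the remainder $R$ maps $\cH^{0,\gg}_p(\B)$ into $\cH^{2,2+\gg}_p(\B)$. Applied to an approximating sequence in $C^\infty_c(\B^\circ)$ for $u\in\cD(\gD_{\min})$ in the graph norm, this identity upgrades $u$ to $\cH^{2,2+\gg}_p(\B)$, completing the identification. The delicate step is the contour shift in the second part, especially in dimension two where the second-order pole at $z=0$ introduces the $\log x$ term: one must carefully track the projections $\pi_j$, control the commutators introduced by the localization $\go$, and verify that each residue lands precisely in the expected eigenspace $E_j$.
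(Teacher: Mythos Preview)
The paper does not give its own proof of this proposition; it is quoted verbatim as Proposition~5.1 of \cite{Sh}. Your plan follows the classical Mellin--residue approach for cone operators (as in \cite{Le} and \cite{Sh}), so in spirit it matches the argument behind the cited result.

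Two points deserve attention. First, in your second step you claim that the remainder after the contour shift lies in $\cH^{2,2+\gamma}_p(\B)$; but the identification $\cD(\Delta_{\min})=\cH^{2,2+\gamma}_p(\B)$ is asserted only under the non-resonance hypothesis $q_j^\pm\neq\frac{n+1}{2}-\gamma-2$. If some $q_j^\pm$ sits on the target line $\Re z=\frac{n+1}{2}-\gamma-2$, you cannot integrate there, and the remainder is only in $\bigcap_{\gve>0}\cH^{2,2+\gamma-\gve}_p(\B)$; it still belongs to $\cD(\Delta_{\min})$, but that requires a separate argument. Second, the Mellin inversion and contour-shift you describe are transparent for $p=2$ via Plancherel, but the statement is for general $1<p<\infty$. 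For $p\neq2$ one cannot simply read off $\cH^{s,\gamma}_p$-membership from the location of the integration line; the rigorous version replaces the bare Mellin integral by mapping properties of Mellin pseudodifferential operators in the cone calculus on the scale $\cH^{s,\gamma}_p$, which is how \cite{Sh} proceeds. Your outline is correct as a heuristic but would need these refinements to become a proof at the stated generality.
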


\begin{corollary}\label{domdelta}
The domains of the closed extensions of $\Delta$ are the sets of the form 
$\cD(\Delta_{\min})\oplus\cE,$
where $\cE$ is any subspace of 
$ \mathop{\oplus}_{q_j^\pm\in I_\gamma}\cE_{q_j^\pm}.$
\end{corollary}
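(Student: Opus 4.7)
The plan is to reduce the statement to a standard quotient argument: closed extensions of $\Delta$ correspond to closed subspaces of the graph-norm Banach space $\cD(\Delta_{\max})$ which contain $\cD(\Delta_{\min})$, and then to use that the relevant quotient is finite-dimensional.

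First I would recall that, by construction, $\Delta_{\min}$ is the closure of $\Delta$ acting on $C^\infty_c(\B^\circ)$, so its graph is contained in that of any closed extension. Dually, $\Delta_{\max}$ is the largest closed operator extending $\Delta$ on $\cH^{0,\gamma}_p(\B)$. Hence a closed extension $\underline\Delta$ of $\Delta$ is determined by a linear subspace $D$ with
\[
\cD(\Delta_{\min})\subseteq D\subseteq \cD(\Delta_{\max}),
\]
such that $D$ is closed in the graph norm $\|u\|_{\Delta_{\max}}=\|u\|_{\cH^{0,\gamma}_p(\B)}+\|\Delta_{\max}u\|_{\cH^{0,\gamma}_p(\B)}$. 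By Proposition \ref{maxmin}, every such $D$ can be written uniquely as $D=\cD(\Delta_{\min})\oplus\cE$ for some linear subspace $\cE\subseteq \bigoplus_{q_j^\pm\in I_\gamma}\cE_{q_j^\pm}$, since the direct sum decomposition in that proposition forces $D\cap\bigoplus_{q_j^\pm\in I_\gamma}\cE_{q_j^\pm}$ to serve as $\cE$.

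Next I would check that every choice of $\cE$ indeed yields a closed subspace, which is the only nontrivial point. The space $\cD(\Delta_{\min})$ is closed in $(\cD(\Delta_{\max}),\|\cdot\|_{\Delta_{\max}})$ because the restriction of $\Delta_{\max}$ to $\cD(\Delta_{\min})$ is $\Delta_{\min}$, which is closed on $\cH^{0,\gamma}_p(\B)$. The subspace $\bigoplus_{q_j^\pm\in I_\gamma}\cE_{q_j^\pm}$ is finite-dimensional, so any subspace $\cE$ of it is finite-dimensional as well. A standard fact in Banach space theory states that the sum of a closed subspace and a finite-dimensional subspace is closed, as one sees by passing to the quotient $\cD(\Delta_{\max})/\cD(\Delta_{\min})$, where the image of $\cE$ is finite-dimensional and therefore closed, and pulling back. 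Consequently $\cD(\Delta_{\min})\oplus \cE$ is closed in the graph norm.

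Combining these two observations gives the claimed bijection between closed extensions of $\Delta$ in $\cH^{0,\gamma}_p(\B)$ and subspaces $\cE\subseteq \bigoplus_{q_j^\pm\in I_\gamma}\cE_{q_j^\pm}$. The main (and essentially only) potential obstacle is to justify the closedness of $\cD(\Delta_{\min})\oplus\cE$, and as indicated, this is immediate from finite-dimensionality of the asymptotic part given by Proposition \ref{maxmin}.
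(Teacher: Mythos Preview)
Your argument is correct and is exactly the standard justification the paper has in mind; the paper states the result as an immediate corollary of Proposition~\ref{maxmin} without giving a proof, so you have simply spelled out the details (the graph-norm characterization of closed extensions and the closedness of $\cD(\Delta_{\min})\oplus\cE$ via finite-dimensionality of the asymptotic part).
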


\begin{definition}\label{ext}
Given a subspace $\underline\cE_{q_j^\pm}$ of $\cE_{q_j^\pm},$ 
we define the space $\underline{\cE}_{q_j^\pm}^\perp$ as follows: 
 \begin{itemize}
 \item[{\rm i)}] If either $q_j^\pm\not=0$ or 
$\mbox{\rm dim}\,\B\not=2$, there exists a unique subspace 
$\underline{E}_j\subseteq E_j$ such that 
$\underline{\cE}_{q_j^\pm}=\omega\, x^{-q_j^\pm}\otimes \underline{E}_j$. 
Then we set 
$$\underline{\cE}_{q_j^\pm}^\perp
=\omega\,x^{-q_j^\mp}\otimes\underline{E}_j^\perp,$$
where $\underline{E}_j^\perp$ is the orthogonal complement of 
$\underline{E}_j$ in $E_j$ with respect to the $L^2(\partial\B)$-scalar product. 
\item[{\rm ii)}] For $\mbox{\rm dim}\,\B=2$ and $q_0^\pm=0$ define 
$\underline{\cE}_0^\perp=\{0\}$ if $\underline{\cE}_0=\cE_0$, 
$\underline{\cE}_0^\perp=\cE_0$ if $\underline{\cE}_0=\{0\}$, and 
$\underline{\cE}_0^\perp=\underline{\cE}_0$ if 
$\underline{\cE}_0=\go\otimes E_0$. 
\end{itemize}
  Note that $\underline{\cE}_{q_j^\pm}^\perp$ is a subspace of
  $\cE_{q_j^\mp}$. For $\dim \B=2$ we let $\cE_{00}=\go\otimes E_0$.
\end{definition}
 
 We now confine ourselves to extensions $\underline{\Delta}$ with domains 
   $$\cD(\underline{\Delta})=\cD(\Delta_{\min})  
     \oplus\mathop{\mbox{$\bigoplus$}}_{q_j^\pm\in I_\gamma}\underline{\cE}_{q_j^\pm}
     \subseteq \cH^{0,\gg}_p(\B)$$
chosen  according to the following rules: 
   \begin{itemize}
    \item[{\rm(i)}] If $q_j^\pm\in I_\gamma\cap I_{-\gamma}$, then 
     $\underline{\cE}_{q_j^\pm}^\perp=\underline{\cE}_{(n-1)-q_j^\pm}$. 
    \item[{\rm(ii)}] If $\gamma\ge0$ and $q_j^\pm\in I_\gamma\setminus I_{-\gamma}$,
     then $\underline{\cE}_{q_j^\pm}=\cE_{q_j^\pm}$. 
    \item[{\rm(iii)}] If $\gamma\le0$ and 
     $q_j^\pm\in I_{\gamma}\setminus I_{-\gamma}$, 
then $\underline{\cE}_{q_j^\pm}=\{0\}$. 
\footnote{We have corrected in (iii) the order of $ I_{\gamma}$ and 
$I_{-\gamma}$ which was misstated in \cite{Sh}.}     
   \end{itemize}
  In particular, 
  $\cD(\underline{\Delta})=\cD(\Delta_{\max})$ if 
  $\gamma\ge1$ and 
  $\cD(\underline{\Delta})=\cD(\Delta_{\min})$ if 
  $\gamma\le-1$.

 \begin{theorem}\label{elldomain2}Let $\gt\in[0,\pi[$,  and  $\phi>0$.
{\renewcommand{\labelenumi}{{\rm (\alph{enumi})}}
\begin{enumerate}
\item 
For $|\gg|< \dim\B/2$ and $|\gg|< 2$,
let $\underline \Delta$ be an extension with domain chosen as above.
Then $c-\underline \Delta\in \cP(\gt)\cap\mathcal{BIP}(\phi)$ for suitably large $c>0$.  

\item For $\dim\B\ge 4$ and $|\gg|< \dim\B/2$ let $\underline \gD=\gD_{\min}$ for 
$\gg\le0$ and $\underline \gD=\gD_{\max}$ for $\gg>0$. 
Then $c-\underline \Delta\in \cP(\gt)\cap\mathcal{BIP}(\phi)$ for suitably large $c>0$.  
\end{enumerate}
}\end{theorem}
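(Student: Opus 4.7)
The plan is to reduce both parts to the theory of closed extensions and bounded imaginary powers of the Laplacian established in \cite{Sh}, proceeding via three steps: parameter-ellipticity of the shifted operator, construction of a sectorial resolvent in Schulze's cone calculus, and derivation of the imaginary-power bound from the resolvent through a Dunford integral.

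First, I would verify parameter-ellipticity of $c-\gD-\gl$ for $\gl\in S_\gt$ and $c$ large. The interior principal symbol is $|\xi|^2+c-\gl$, which is invertible outside any small sector containing $\R_+$. For the conormal symbol, the explicit formula \eqref{inverse} shows that the non-bijectivity points of $\gs_M(\gD)$ are exactly the $q_j^\pm$. The weight conditions $|\gg|<\dim\B/2$ in both (a) and (b), together with the supplementary condition $|\gg|<2$ in (a), are designed to guarantee that the weight line $\{\Re z=\tfrac{n+1}{2}-\gg\}$ avoids all $q_j^\pm$, so $\gs_M(\gD)$ is bijective along it and the cone ellipticity condition holds uniformly in $\gl\in S_\gt$.

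Second, I would build the resolvent in Schulze's cone calculus by composing an interior parametrix with the Mellin-quantized conormal-symbol inverse, glued via a cut-off near the boundary and corrected by smoothing Green operators to force the image into $\cD(\underline{\gD})$. The rules (i)--(iii) are precisely what makes this construction consistent with the chosen extension: rule (i) enforces the $L^2$-duality at the symmetric exponents $q_j^\pm$ and $(n-1)-q_j^\pm=q_j^\mp$, while rules (ii)--(iii) take the full space or $\{0\}$ at one-sided exponents $q_j^\pm\in I_\gg\setminus I_{-\gg}$ depending on the sign of $\gg$. For part (b), the dimension bound together with the weight range reduce these prescriptions to the min/max choice. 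The resulting estimate
\[
\|(c-\underline{\gD}+\gl)^{-1}\|_{\cL(\cH^{0,\gg}_p(\B))}\le\frac{K}{1+|\gl|},\qquad \gl\in S_\gt,
\]
is exactly $c-\underline{\gD}\in\cP(\gt)$. Bounded imaginary powers then follow from the Dunford representation
\[
(c-\underline{\gD})^{it}=\frac{1}{2\pi i}\int_\gG \zeta^{it}(\zeta-(c-\underline{\gD}))^{-1}\,d\zeta
\]
along a Hankel contour wrapping the spectrum. Since $\gt$ can be chosen arbitrarily close to $\pi$, standard contour estimates combined with operator-valued Mellin symbol bounds from the cone calculus yield $\|(c-\underline{\gD})^{it}\|\le Me^{\phi|t|}$ for any preassigned $\phi>0$, on the UMD space $\cH^{0,\gg}_p(\B)$.

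The hard part is the second step: the parametrix built from \eqref{inverse} naturally produces asymptotic contributions in \emph{all} $\cE_{q_j^\pm}$ with $q_j^\pm\in I_\gg$, and only the precise self-pairing prescribed by (i)--(iii) ensures that these contributions land in $\underline{\cE}_{q_j^\pm}$, so that the resolvent preserves the domain modulo a compact remainder that is absorbed by taking $c$ large. This balancing is only possible because the weight restrictions keep $I_\gg$ small enough; checking the hypotheses in the weight ranges stated in (a) and (b) is what distinguishes the two cases, whereas the core of the argument is the main theorem of \cite{Sh}.
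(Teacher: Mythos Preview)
Your overall plan matches the paper's approach exactly: the paper's proof is simply a citation to \cite[Theorems 4.3, 5.6, 5.7]{Sh}, and you are sketching the strategy behind those results. The three-step outline (parameter-ellipticity, parametrix in the cone calculus, BIP from the structured resolvent) is the correct architecture of the cited work.

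There is, however, a misidentification in your first step. You claim that the weight restrictions $|\gg|<\dim\B/2$ and $|\gg|<2$ are ``designed to guarantee that the weight line $\{\Re z=\tfrac{n+1}{2}-\gg\}$ avoids all $q_j^\pm$''. This is not what they do: avoidance of the $q_j^\pm$ by the weight line is a \emph{generic} condition on $\gg$ (it fails on a countable set) and is not implied by the stated inequalities. The bound $|\gg|<\dim\B/2$ governs the position of $q_0^-=0$ and $q_0^+=n-1$ relative to the strip, which is what makes the duality rules (i)--(iii) consistent and, in part (b), collapse to the min/max dichotomy. The additional bound $|\gg|<2$ in part (a) is a technical hypothesis needed for the iteration in the proof of \cite[Theorem 5.7]{Sh} to close in higher dimensions (as the paper notes; see also Remark~\ref{r1}). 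In other words, these bounds control the \emph{size and symmetry} of $I_\gg$ and $I_{-\gg}$, not the ellipticity on the weight line itself.

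A second, smaller point: in your third step, the plain Dunford integral with the sectorial bound $\|(\zeta-(c-\underline\gD))^{-1}\|\le K/(1+|\zeta|)$ is not absolutely convergent for $\zeta^{it}$. The argument in \cite{Sh} (and in \cite{CSS2}) really uses that the resolvent lies in a parameter-dependent cone operator class, so that the complex powers themselves belong to the calculus; the norm bound $Me^{\phi|t|}$ is then read off from the symbol estimates, not from a raw contour estimate. Your parenthetical ``operator-valued Mellin symbol bounds from the cone calculus'' gestures at this, but it is the essential input, not a supplement.
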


\Proof{(a) For $\dim\B\le 3$ this is  \cite[Theorem 5.7]{Sh} combined with 
\cite[Theorem 4.3]{Sh}.
Inspection shows that  the proof of \cite[Theorem 5.7]{Sh} extends to higher dimensions provided $|\gg|<2$.

(b) is \cite[Theorem 5.6]{Sh} combined with \cite[Theorem 4.3]{Sh}.} 

\begin{remark}\label{r1} {\rm For arbitrary dimension of $\mathbb{B}$, part (a) of Theorem \ref{elldomain2} extends to the case where $|\gamma|< \dim\B/2$ for an extension $\underline \gD$ satisfying the rules (i), (ii) and (iii) above. This follows by iterating the argument given in the proof of \cite[Theorem 5.7]{Sh} and using that the interval $]0,n-1[$ contains none of the $q_{j}^{\pm}$. }
\end{remark}
 
\begin{remark} {\rm An extension $\underline \Delta$  in 
$\mathcal{H}^{0,\gamma}_{p}(\B)$ induces an unbounded operator in 
$L^{q}(0,T;\mathcal{H}^{0,\gamma}_{p}(\B))$, $1<q<\infty$,  
by the relation $(\underline\Delta u)(t)=\underline\Delta(u(t))$.
We denote it again by $\underline\Delta$. }
\end{remark}

\section{The Linearized Problem}
We recall that the gradient associated to the metric $g$, 
$\nabla:C^{\infty}(\mathbb{B}^\circ)\rightarrow 
\gG^\infty(\mathbb{B}^\circ,T\mathbb{B}^\circ)$ is defined by 
\begin{gather*}
\nabla u=\mathrm{grad}\,u=\sum_{ij}g^{ij}\frac{\partial u}{\partial x^{i}}\frac{\partial }{\partial x^{j}},
\end{gather*}
where $(x^1,\ldots, x^{n+1})$ are local coordinates and 
$(g^{ij})=(g_{ij})^{-1}$ is the inverse to the matrix defining $g$ in these
coordinates. 
Near the boundary, $g^{-1}=dx^2 +x^{-2}h^{-1}$ with the notation 
introduced in Section \ref{Laplacian}. 
If $T\B^\circ$ 
is equipped with the Riemannian inner product $(\cdot,\cdot)_g$
given by $g$, 
then 
$$
\Delta u^{3}
= 3u^2\gD u 
-6u \sum g^{ij}\frac{\partial u}{\partial x^i} \frac{\partial u}{\partial x^j}
=3u^{2}\Delta u-6u(\nabla u,\nabla u)_g. 
$$
In coordinates $(x,y^1,\ldots,y^n)$ near the boundary, 
\begin{eqnarray}\label{nabla}
(\nabla u,\nabla v)_g
= \frac1{x^2}((x\partial_x u)( x\partial_x v)
+ \sum_{i,j=1}^n h^{ij}(y)\partial_{y^i}u\,\partial_{y^j}v).
\end{eqnarray}
This allows us to write Equation \eqref{CH} as 
\begin{eqnarray}\label{e3} 
\partial_t u +A(u)u = F(u), \quad u(0)=u_0 
\end{eqnarray}
with
\begin{eqnarray}\label{A}
A(v)u=\Delta^{2}u+\Delta u-3v^{2}\Delta u \,\,\, \mbox{and} \,\,\, 
F(u)=-6u(\nabla u,\nabla u)_g.
\end{eqnarray}
In order to find a suitable domain for the unbounded operator $A(v)$,
we next study the Laplacian. 

\subsection{The choice of an extension of $\gD$}\label{choice}
We proceed to define an extension $\underline\Delta$ of the Laplacian 
on $\cH^{0,\gg}_p(\B)$ satisfying
the assumptions of Theorem \ref{elldomain2}.
Our choice will depend on the dimension. 
We abbreviate
\begin{eqnarray}
\bar\gve = -q_1^->0.
\end{eqnarray}
Note that $\bar\gve$ actually depends on $n$ and the spectrum of $\Delta_\partial$.

\begin{proposition}\label{extdelta} 
The assumptions of Theorem $\ref{elldomain2}$ are fulfilled 
for the choice of extensions outlined in  Sections $\ref{2d}-\ref{4d}$, below. 
\end{proposition}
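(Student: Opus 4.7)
The plan is to verify the hypotheses of Theorem \ref{elldomain2}, supplemented by Remark \ref{r1} in higher dimensions, for the concrete pairs $(\gamma,\underline\Delta)$ that will be specified in the forthcoming Sections \ref{2d}--\ref{4d}. In each dimensional regime the verification splits naturally into an analytic check on the weight and a structural check on the extension.

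For the analytic part, I expect the weights $\gamma$ introduced below to be chosen close to zero, so that $|\gamma|<\dim\B/2$ is immediate. When $\dim\B\le 3$ one automatically also has $|\gamma|<2$, and part (a) of Theorem \ref{elldomain2} applies directly. When $\dim\B\ge 4$, I would invoke either part (b) of Theorem \ref{elldomain2} (provided $\underline\Delta$ is chosen as $\Delta_{\min}$ or $\Delta_{\max}$ according to the sign of $\gamma$), or else part (a) through its extension in Remark \ref{r1}.

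For the structural part, I would first determine which non-bijectivity points $q_j^\pm$ of $\sigma_M(\Delta)$ lie in the relevant interval $I_\gamma$. In view of the definition $\bar\gve=-q_1^->0$ and the smallness of the chosen weights, the indices involved will typically be only $j=0$, and in some low-dimensional cases also $j=1$. For each such exponent I would then check the rules (i), (ii), (iii) preceding Theorem \ref{elldomain2}: when both $q_j^\pm$ and the mirror exponent $(n-1)-q_j^\pm$ belong to $I_\gamma\cap I_{-\gamma}$, the $L^2$-orthogonal partner condition $\underline\cE_{q_j^\pm}^\perp=\underline\cE_{(n-1)-q_j^\pm}$ of (i) must be matched pairwise; outside this overlap, (ii) and (iii) force either the full space $\cE_{q_j^\pm}$ or $\{0\}$ according to the sign of $\gamma$.

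The main obstacle will be the two-dimensional case, where $q_0^+=q_0^-=0$ is a double non-bijectivity point of $\sigma_M(\Delta)$ and the model space $\cE_0$ contains the logarithmic element $\go\log x\otimes E_0$. There rule (i) has to be interpreted through clause~ii) of Definition \ref{ext}, and one must carefully single out $\underline\cE_0\subseteq\cE_0$ so that its partner $\underline\cE_0^\perp$ matches in the prescribed way and the resulting extension still falls under part (a) of Theorem \ref{elldomain2}. Once this has been settled and the remaining cases in dimension three and higher are checked along the pattern described, the proposition will follow.
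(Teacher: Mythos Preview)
Your plan matches the paper's approach: there is no separate proof block for this proposition, and the verification is carried out case by case in Sections~\ref{2d}--\ref{4d} exactly along the lines you outline (identify $I_\gamma$, determine which $q_j^\pm$ it contains, and confirm rules (i)--(iii) for the specified subspaces $\underline\cE_{q_j^\pm}$, with the special handling of $\cE_{00}$ in dimension two via Definition~\ref{ext}(ii)). One small correction to your expectation: in higher dimensions the weight is not chosen close to zero but just above $(n-3)/2$; nevertheless $|\gamma|<\dim\B/2$ still holds and, as you anticipated, the paper appeals to Theorem~\ref{elldomain2}(b) or Remark~\ref{r1} there.
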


\subsubsection{The two-dimensional case}\label{2d}
For  $\dim \B = n+1=2$ we pick a weight 
$$-1< \gg<\min\left\{-1+\bar\gve,1\right\}.
$$
This guarantees that $\frac{n+1}2-\gg-2 $ coincides with none of
the $q^\pm_j$ and hence that the minimal domain  
is $\cH^{2,2+\gg}_p(\B)$ by Proposition \ref{maxmin}.
Moreover, $I_\gg\cap I_{-\gg}=\{q_0^\pm\}=\{0\}$.
We choose 
$$\cD(\underline\gD)=\cH^{2,2+\gg}_p(\B)\oplus   \cE_{00},$$
cf.\ Definition \ref{ext}.
By Corollary \ref{c1}, the domain consists of bounded functions only.
Note, moreover, that $ \cE_{00}\subseteq \cH^{\infty,1-\gd}_p(\B)$ for every 
$\gd>0$.

\subsubsection{The three-dimensional case}\label{3d}
For  $\dim\B=3$ we choose  
$$-\frac{1}2<\gg<\min\left\{-\frac{1}2+\bar\gve,\frac{3}{2}\right\}.$$
Then $\frac{n+1}2-\gg-2$ coincides with none of the $q_j^\pm$, and 
the minimal domain 
is $\cH^{2,2+\gg}_p(\B)$  
by Proposition \ref{maxmin}.
The intersection  $I_\gg\cap I_{-\gg}$ equals $\{0,1\}$ for $\gamma<\frac{1}{2}$, and it is empty for $\frac{1}{2}\leq\gamma<\frac{3}{2}$.
According to  Theorem \ref{elldomain2} we choose 
\begin{eqnarray}\label{dome0}
\cD(\underline\gD)= \cH^{2,2+\gg}_p(\B) \oplus \cE_0,
\end{eqnarray}
where $\cE_0$ is the full asymptotics space associated 
with $q^-_0=0$. 

\subsubsection{Higher dimensions}\label{4d}
Next assume $4\le\dim\B$ and choose  
$$\frac{n-3}2<\gg<\min\left\{\frac{n-3}2+\bar\gve,\frac{n+1}{2}\right\}.$$
Then, again $\frac{n+1}2-\gg-2$ does not coincide with any $q_j^\pm$, and 
the minimal domain 
is $\cH^{2,2+\gg}_p(\B)$  
by Proposition \ref{maxmin}.
The intersection  $I_\gg\cap I_{-\gg}$ is empty.
Thus, according to  Theorem \ref{elldomain2} or Remark \ref{r1}, we choose
\begin{eqnarray}
\cD(\underline\gD)= \cH^{2,2+\gg}_p(\B) \oplus \cE_0,
\end{eqnarray}
where again $\cE_0$ is the full asymptotics space of $q^-_0=0$.

\subsection{The domain of $\underline\Delta^2$}
We choose the extension of the bilaplacian induced by our choice of the extension
$\underline \gD$, namely 
\begin{eqnarray*}\label{dom1}
\cD(\underline\gD^2) = \{u\in\cD(\underline\gD): \gD u \in \cD(\underline\gD)\}.
\end{eqnarray*}
According to (2.13) in \cite{Sh}, its conormal symbol is the function 
$$\gs_M(\gD^2) (z) = \gs_M(\gD)(z+2)\gs_M(\gD)(z) .$$
A formula for the inverse follows from \eqref{inverse} and the orthogonality of
the projections $\pi_j$: 
\begin{eqnarray*}
\gs_M(\gD^2)(z)^{-1} &=&   
\sum_{j,k=0}^\infty\frac{1}{(z-q_j^+)(z-q_j^-)(z+2-q_k^+)(z+2-q_k^-)}\pi_j\pi_k\\
&=&\sum_{j=0}^\infty\frac{1}{(z-q_j^+)(z-q_j^-)(z+2-q_j^+)(z+2-q_j^-)}\pi_j.
\end{eqnarray*}
In fact, this is the inverse on $L^2(\partial \B)$.  
As it is a pseudodifferential operator, it extends/restricts to $H^s_p(\partial \B)$ 
for all $1<p<\infty, s\in\R$.
Clearly, we have poles at the points $z=q_j^\pm$ and $z=q_j^\pm-2$.
We denote the collection of all these points by $\cQ$. 
We obtain:
\begin{lemma} {\renewcommand{\labelenumi}{{\rm (\alph{enumi})}}
\begin{enumerate}
\item
If $\dim \B=2$, then we have at least two double poles, namely  at $z=0$
and $z=-2$. 
An additional double pole  occurs 
if $q_j^+-2=q_j^-$ for some $j$. This requires $\gl_j=-1$ for some $j$,
so that this pole will be in  $z=-1$.
\item
If $\dim \B=3$, then a double pole can only occur if 
$q_j^+-2=q_j^-$ for some $j$. As this is precisely the case if $\gl_j=-3/4$,
this pole will be in $z=-1/2$. 
 \item

If $\dim \B=4$, then we have a double pole at $z=0$, since then 
$q^+_0-2=0=q^-_0$.
\item 
For $\dim\B\ge 5$ all poles are simple.
\end{enumerate}}
\end{lemma}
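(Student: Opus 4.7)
The approach will be to work directly from the explicit series
\begin{equation*}
\sigma_M(\Delta^2)(z)^{-1}=\sum_{j=0}^\infty \frac{1}{D_j(z)}\,\pi_j,\qquad
D_j(z)=(z-q_j^+)(z-q_j^-)(z+2-q_j^+)(z+2-q_j^-),
\end{equation*}
and determine the order of a pole by looking at each summand separately. The first observation I would record is that the $\pi_j$ are pairwise orthogonal (in particular linearly independent as operators on $L^2(\partial\B)$), so coincidences between pole locations coming from different indices $j$ can never push the pole order up: the order of the pole of $\sigma_M(\Delta^2)^{-1}$ at $z_0$ equals the maximum order of the pole of $1/D_j$ at $z_0$ over all $j$. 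Consequently double poles can arise only from a single $j$ for which $D_j$ has a repeated root.

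Next I would enumerate the ways in which two of the four roots $\{q_j^+,\,q_j^-,\,q_j^+-2,\,q_j^--2\}$ of $D_j$ can coincide. Since $q_j^+\ge q_j^-$, the only nontrivial coincidences are
\begin{equation*}
\text{(I) } q_j^+=q_j^-, \qquad \text{(II) } q_j^+-2=q_j^-.
\end{equation*}
(The remaining identifications either are equivalent to these or force the positive square root in \eqref{pjpm} to be negative.) Case (I) automatically forces also $q_j^+-2=q_j^--2$, i.e.\ two double roots. Using
\begin{equation*}
q_j^\pm=\tfrac{n-1}{2}\pm\sqrt{\bigl(\tfrac{n-1}{2}\bigr)^2-\lambda_j}
\end{equation*}
these conditions translate, respectively, into $\lambda_j=\bigl(\tfrac{n-1}{2}\bigr)^2$ and $\lambda_j=\bigl(\tfrac{n-1}{2}\bigr)^2-1$.

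With this in hand, the four assertions reduce to arithmetic, using $\lambda_0=0>\lambda_1>\lambda_2>\cdots$. For (a), $\dim\B=2$ gives $\tfrac{n-1}{2}=0$, so (I) has the unique solution $\lambda_0=0$, yielding $q_0^+=q_0^-=0$ and hence double poles at $z=0$ and $z=-2$ coming from the summand $j=0$; condition (II) reads $\lambda_j=-1$, which, if realized, produces a double pole at $q_j^-=-1$. For (b), $\dim\B=3$ forces $\bigl(\tfrac{n-1}{2}\bigr)^2=\tfrac14$, so (I) would require $\lambda_j=\tfrac14>0$, impossible; (II) reduces to $\lambda_j=-\tfrac34$, giving a double pole at $q_j^-=\tfrac12-1=-\tfrac12$. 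For (c), $\dim\B=4$ yields $\bigl(\tfrac{n-1}{2}\bigr)^2=1$, so (II) becomes $\lambda_j=0$, which is satisfied only by $j=0$, with $q_0^+=2$, $q_0^-=0$, producing a double pole at $z=0$. For (d), $\dim\B\ge 5$ gives $\bigl(\tfrac{n-1}{2}\bigr)^2-1\ge\tfrac{5}{4}>0$, while $\lambda_j\le 0$, so neither (I) nor (II) is ever satisfied and every pole is simple.

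I do not expect any step here to be a genuine obstacle; the main care point is the justification that orthogonality of the $\pi_j$ precludes coincidences across different indices from producing higher-order poles, and that all potential double-root configurations have been exhausted (which is a short case check on the four roots of $D_j$).
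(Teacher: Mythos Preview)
Your argument is correct. The paper states this lemma without proof, treating it as an immediate consequence of the explicit formula for $\sigma_M(\Delta^2)(z)^{-1}$ displayed just before it; your write-up simply makes the implicit case analysis explicit, and each step checks out. The one point worth keeping in the final version is the remark that linear independence of the projections $\pi_j$ rules out higher-order poles arising from coincidences across different indices $j$---this is the only place where something beyond elementary arithmetic is used, and it is exactly the reason the pole order is the maximum over $j$ rather than a sum.
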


\begin{remark}\rm For the analysis of the bilaplacian 
it is desirable to choose $\gg$ such that the line 
$\{\Re z =\frac{n+1}2-\gg-4\}$ 
does not intersect $\cQ$, for then the minimal domain is 
\begin{eqnarray*}\cD(\gD^2_{\min})= \cH_p^{4,\gg+4}(\B).
\end{eqnarray*}
In case $\cQ$ intersects this line, we have 
\begin{eqnarray*}
\cD(\gD^2_{\min})=\{u\in \bigcap_{\gve>0}\cH^{4,4+\gg-\gve}_p(\B):\gD^2u\in 
\cH^{0,\gg}_p(\B)\}.
\end{eqnarray*}
In particular, 
\begin{eqnarray}\label{mindom}
\cH^{4,4+\gg}_p(\B)\subseteq \cD(\Delta^2_{\min})\subseteq \cH^{4,4+\gg-\gve}_p(\B)\quad 
\text{for all } \gve>0. 
\end{eqnarray}
See \cite[Proposition 2.3]{Sh} for details.
\end{remark}

For a pole $\rho\in \cQ$ of order $k$ we denote by $\tilde \cE_{\rho}$ 
the asymptotics space associated to this pole; it is determined by Equation (2.11) 
in \cite{Sh} and is of the form   
\begin{eqnarray}\label{asy}
\tilde \cE_{\rho} =\textrm{span}\{x^{\rho}\log^l x\,\go(x) e(y):  
l=0,\ldots k-1,e\in \tilde E_{\rho}\},
\end{eqnarray}
where $\tilde E_{\rho}$ is a finite-dimensional subspace of $C^\infty(\partial\B)$
consisting of eigenfunctions of $\gD_\partial$.
Note that in our case $k$ can only take the values $1$ and $2$. 
 
Now we know on one hand that, for $0\not=e\in C^\infty(\partial \B)$,   
$$x^{-\rho}\log^lx\,\go(x)e\in \cH_p^{s,\gg}(\B) \text{ if and only if }
\Re\, \rho<\frac{n+1}2-\gg;$$ 
on the other hand, for a pole in $\rho\in\cQ$ of order $k$, $l<k$, and $e\in \tilde E_q$, 
$$\gD(x^{-\rho}\log^lx\,\go(x)e) \in C^\infty_c(\B^\circ)\subseteq \cH_p^{\infty,\infty}(\B).$$

\begin{proposition}\label{dom}We define the interval 
$$J=J(n,\gg) = \ ]\frac{n+1}2-\gg-4, \frac{n+1}2-\gg-2[.$$
With the choices made in Sections $\ref{2d}$--$\ref{4d}$ we have 
{\renewcommand{\labelenumi}{{\rm (\alph{enumi})}}
\begin{enumerate}
\item For $\dim\B=2$ and the extension $\underline \gD$ in $\ref{2d}$ 
we have 
\begin{eqnarray*}\label{Delta2}
\cD(\underline \gD^2)= \cD(\Delta^2_{\min})\oplus 
\bigoplus_{\rho\in J}\tilde \cE_\rho\oplus \mathcal{E}_{00}.\\
\end{eqnarray*}

\item For $\dim\B\geq3$ and the extension $\underline \gD$ in $\ref{3d}$ or $\ref{4d}$ we have
\begin{eqnarray*}\label{dom3}
\cD(\underline \gD^2)=  \cD(\Delta^2_{\min})\oplus  
\bigoplus_{\rho\in\, J}\tilde \cE_\rho \oplus \cE_0.
\end{eqnarray*}

\end{enumerate}} 
\end{proposition}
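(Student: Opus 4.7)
The plan is to identify $\cD(\underline\gD^2)$ in two stages: first describe $\cD(\gD^2_{\max})$ by abstract cone theory, then cut down by the two defining conditions $u\in\cD(\underline\gD)$ and $\gD u\in\cD(\underline\gD)$. By the general structure theorem for elliptic cone operators (see \cite[Sec.~2]{Sh}),
$$\cD(\gD^2_{\max})=\cD(\gD^2_{\min})\oplus\bigoplus_{\rho\in\cQ,\ \Re\rho\in(\frac{n+1}{2}-\gg-4,\,\frac{n+1}{2}-\gg)}\tilde{\cE}_\rho.$$
The relevant strip splits as $J\cup K$ with $K=(\frac{n+1}{2}-\gg-2,\,\frac{n+1}{2}-\gg)$. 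Using the factorization $\gs_M(\gD^2)(z)=\gs_M(\gD)(z+2)\gs_M(\gD)(z)$ and the explicit formula for $\gs_M(\gD)^{-1}$ recorded just above, every pole $\rho\in K$ coincides with a Laplacian pole $q_j^\pm\in I_\gg$ (so $\tilde{\cE}_\rho=\cE_\rho$), while every pole $\rho\in J$ arises from $\rho+2\in\{q_j^\pm\}$.

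Next I analyze the two constraints separately. Writing $u\in\cD(\gD^2_{\max})$ as $u=u_0+u_J+u_K$ according to the direct sum above, the requirement $u\in\cD(\underline\gD)$ imposes nothing on $u_J$ (for $\Re\rho<\frac{n+1}{2}-\gg-2$ the functions $x^{-\rho}\log^l x\,\go\,e$ already lie in $\cH^{2,2+\gg}_p(\B)\subseteq\cD(\underline\gD)$) but forces $u_K\in\underline\cE$. For the second requirement, $\gD u_0\in\cH^{2,2+\gg}_p(\B)$ is automatic, and a direct computation with \eqref{e111} yields
$$\gD(x^{-\rho}\go\,e_j)\equiv x^{-\rho-2}P_{\gl_j}(-\rho)\,\go\,e_j \pmod{C^\infty_c(\B^\circ)},\qquad P_\gl(s)=s^2+(n-1)s+\gl.$$
Since $P_{\gl_j}(-q_j^\pm)=0$, we obtain $\gD u_K\in C^\infty_c(\B^\circ)\subseteq\cD(\underline\gD)$; for $\rho\in J$ with $\rho+2=q_j^\pm$ the image lands in the Laplacian asymptotic space $\cE_{\rho+2}$ inside $K$, with a coefficient $P_{\gl_j}(2-q_j^\pm)=2(n+1-2q_j^\pm)$ which is generically nonzero.

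It then remains to check in each dimension that the extensions $\underline\gD$ from Sections \ref{2d}--\ref{4d} were chosen precisely so that $\gD\tilde\cE_\rho\subseteq\underline\cE+C^\infty_c(\B^\circ)$ for every $\rho\in J$; this yields the $\supseteq$ inclusion directly, while the $\subseteq$ inclusion follows from the two characterizations above. The main obstacle is the double-pole case, which as enumerated in the preceding Lemma occurs in dimensions $2$, $3$, or $4$: there $\tilde{\cE}_\rho$ contains an additional logarithmic summand $x^{-\rho}\log x\,\go\,e$ whose $\gD$-image acquires both $x^{-\rho-2}$ and $x^{-\rho-2}\log x$ contributions via the product rule inside \eqref{e111}. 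These must be matched term-by-term against the correct pieces of the prescribed $\underline\cE$ (for instance $\cE_0$ versus $\cE_{00}$ in dimension two, and the $q_0^+=2$ degeneracy $P_{\gl_0}(0)=0$ in dimension four), which is the delicate bookkeeping step that drives the separation of the statement into parts (a) and (b).
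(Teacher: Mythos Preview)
The paper provides no separate proof: the two displayed observations immediately preceding the proposition (the membership criterion for $x^{-\rho}\log^l x\,\go\,e$ in $\cH^{s,\gg}_p(\B)$, and the assertion that $\gD(x^{-\rho}\log^l x\,\go\,e)\in C^\infty_c(\B^\circ)$ for every bilaplacian pole $\rho$) are meant to yield the result directly.  Your overall plan---decompose $\cD(\gD^2_{\max})$ as minimal domain plus asymptotic pieces over the full strip, split the strip into $J$ and $K$, and then impose the two defining conditions $u\in\cD(\underline\gD)$ and $\gD u\in\cD(\underline\gD)$---matches this setup exactly.

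The genuine gap is that you stop at ``it then remains to check'' precisely where the argument ceases to be routine, and a careful execution of that check actually uncovers a discrepancy with part~(a) as stated.  In dimension two the double pole $\rho=-2$ lies in $J$, and by your own formula \eqref{asy} one has $\go\,x^{2}\log x\,e_{0}\in\tilde\cE_{-2}$ for $e_{0}\in E_{0}$.  Using \eqref{e111} with $n=1$,
\[
\gD\bigl(\go\,x^{2}\log x\,e_{0}\bigr)\;=\;4\,\go\,(\log x+1)\,e_{0}\quad\pmod{C^\infty_c(\B^\circ)}.
\]
The summand $4\,\go\log x\,e_{0}$ lies in $\cE_{0}\setminus\cE_{00}$, and since $\gg>-1$ it also lies outside $\cH^{2,2+\gg}_p(\B)$.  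Hence $\gD(\go\,x^{2}\log x\,e_{0})\notin\cD(\underline\gD)$, so $\go\,x^{2}\log x\,e_{0}\notin\cD(\underline\gD^{2})$, contradicting the $\supseteq$ inclusion.  This is hidden in the paper by the preparatory assertion ``$\gD(x^{-\rho}\log^l x\,\go\,e)\in C^\infty_c(\B^\circ)$'': that claim is correct for $\gD^{2}$ but \emph{not} for $\gD$ when $\rho=q_j^\pm-2$.  So the ``delicate bookkeeping'' you flag is not bookkeeping at all; it shows that in part~(a) the logarithmic half of $\tilde\cE_{-2}$ must be excluded from $\cD(\underline\gD^{2})$.  (The inclusion $\subseteq$, which is what the later Lemma~\ref{4.1} actually uses, is unaffected.)
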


\begin{corollary}\label{poles}\rm
We can now describe the domain of the bilaplacian explicitly, using \eqref{asy}. 
Let $J$ be the interval introduced in Proposition \ref{dom}.
{\renewcommand{\labelenumi}{{\rm (\alph{enumi})}}
\begin{enumerate}
\item For $\dim \B=2$ and the extension in \ref{dom}(a),  where 
$-1<\gg<\min\{-1+\overline\gve,1\}$, the interval $J=\ ]-3-\gg, -1-\gg[$ will contain the 
double pole in $z=-2$, but not that in $z=0$. Depending on $\gg$ and $\overline\gve$,
it might contain the possible double pole in $z=-1$.

\item For $\dim \B=3$ and the extension in \ref{dom}(b), the interval $J$
might contain the only possible double pole at $z=-1/2$; it will if 
$\overline\gve<1/2$. 

\item For $\dim\B=4$ and the extension in \ref{dom}(b), there is a double pole in 
$z=0$ which is not contained in $J=\ ]-\gg-2,-\gg[$.

\item For $\dim\B>4$ no double poles arise. 
\end{enumerate}
}
\end{corollary}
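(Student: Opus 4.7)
The plan is to verify each of the four assertions by a direct computation: write $J=J(n,\gg)=(\tfrac{n+1}{2}-\gg-4,\tfrac{n+1}{2}-\gg-2)$ explicitly for the relevant dimension and compare its endpoints to each potential double pole catalogued in the lemma preceding Proposition \ref{dom}, using the constraints on $\gg$ fixed in Sections \ref{2d}--\ref{4d}. No machinery beyond the cited lemma and the dimension-dependent weight ranges is needed.

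For (a), substituting $n=1$ gives $J=(-3-\gg,-1-\gg)$. Under the choice $-1<\gg<\min\{-1+\bar\gve,1\}$ the two inequalities $-3-\gg<-2<-1-\gg$ hold, so $z=-2\in J$ unconditionally; the inequality $0<-1-\gg$ fails (it would force $\gg<-1$), so $z=0\notin J$; and $z=-1\in J$ is equivalent to $-3-\gg<-1<-1-\gg$, i.e. $\gg<0$, combined with the existence of an eigenvalue $\gl_j=-1$ of $\gD_\partial$, which is precisely the conditional clause in the statement.

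For (b), substituting $n=2$ yields $J=(-\tfrac{5}{2}-\gg,-\tfrac{1}{2}-\gg)$. The only candidate double pole from the lemma is $z=-\tfrac{1}{2}$, which lies in $J$ iff $\gg<0$. Since Section \ref{3d} imposes $-\tfrac{1}{2}<\gg<\min\{-\tfrac{1}{2}+\bar\gve,\tfrac{3}{2}\}$, the extra hypothesis $\bar\gve<\tfrac{1}{2}$ makes the upper bound $-\tfrac{1}{2}+\bar\gve$ negative and hence forces $\gg<0$, so $-\tfrac{1}{2}\in J$. For (c), substituting $n=3$ gives $J=(-\gg-2,-\gg)$; the only double pole is at $z=0$, but Section \ref{4d} requires $\gg>(n-3)/2=0$, so $-\gg<0$ and $0\notin J$. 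Finally, (d) is immediate from the last item of the lemma, which states that for $\dim\B\ge 5$ every pole of $\gs_M(\gD^2)^{-1}$ is simple.

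The argument is entirely mechanical and no step presents a real obstacle; the only care required is to keep the dimension-dependent ranges of $\gg$ from Sections \ref{2d}--\ref{4d} straight when comparing them with the endpoints of $J$.
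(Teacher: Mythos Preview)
Your verification is correct and matches the paper's approach: the paper states this corollary without proof, treating it as an immediate consequence of the preceding lemma and the dimension-dependent weight ranges, which is exactly the direct computation you carry out. All four cases check out as written.
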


\subsection{Embedding the interpolation space $X_{q}$}\label{Ep}
We shall apply the theorem of Clément and Li with the choices 
$X_0=\cH^{0,\gg}_p(\B)$ and $X_1=\cD(\underline \gD^2)$ for 
$2<q<\infty$.
We next  look for a suitable embedding of the interpolation space $X_q$. 
For arbitrary $\eta$ with $1/2<\eta<1-1/q$ we have
\begin{gather}\label{e32}
X_{q}:=(X_{0},X_{1})_{1-\frac{1}{q},q}
\hookrightarrow[X_{0},X_{1}]_{\eta}
=[\cH^{0,\gg}_p(\B), \cD(\underline{\Delta}^2)]_{\eta}=
[\cD((c-\underline{\Delta})^0),\cD((c-\underline{\Delta})^2) ]_\eta,
\end{gather}
for suitably large $c>0$. 
Since $c-\underline{\Delta}\in\mathcal{BIP}(\phi)$ for any $\phi>0$ and 
sufficiently large $c>0$, we apply  (I.2.9.8) in \cite{Am} and obtain
\begin{gather}\label{e62}
[\cD((c-\underline{\Delta})^{0}),\cD((c-\underline{\Delta})^{2})]_{\eta}
\hookrightarrow\cD((c-\underline{\Delta})^{(1-\eta)0+2\eta})
=\cD((c-\underline{\Delta})^{2\eta}).
\end{gather} 
As $\eta>1/2$, we have $2\eta = 1+\vartheta$ for some $\vartheta>0$. 
We apply once more (I.2.9.8) in \cite{Am} and use the fact that 
$\cD(\underline\gD)\subseteq \cH^{2,\gg+\gve_0}_p(\B)$ and  
$\cD(\underline\gD^2)\subseteq \cH^{4,\gg+\gve_1}_p(\B)$ for suitable 
$\gve_0,\gve_1>0$ and $0<\vartheta'<\vartheta$:
\begin{eqnarray}
\lefteqn{\cD((c-\underline\gD)^{2\eta})=
\cD((c-\underline{\Delta})^{1+\vartheta})}\nonumber\\
&=&[\cD(c-\underline\gD), \cD((c-\underline\gD)^2)]_{\vartheta}\hookrightarrow
[\cH^{2,\gg+\gve_0}_p(\B), \cH^{4,\gg+\gve_1}_p(\B)]_{\vartheta}\label{embed}\\
&\hookrightarrow&(\cH^{2,\gg+\gve_0}_p(\B), \cH^{4,\gg+\gve_1}_p(\B))_{\vartheta',p}.
\nonumber
\end{eqnarray}
Next we use Lemma 5.4 in \cite{CSS1} to conclude that, for arbitrary 
$\gd_0, \gd_1>0$, we have   
\begin{eqnarray*}
\lefteqn{(\cH^{2,\gg+\gve_0}_p(\B), \cH^{4,\gg+\gve_1}_p(\B))_{\vartheta',p}}\\
&\hookrightarrow &
\cH_p^{4\vartheta'+2(1-\vartheta')-\gd_0, \gg+\vartheta'\gve_1+(1-\vartheta')\gve_0-\gd_1}(\B)
=\cH_p^{2+2\vartheta'-\gd_0, \gg+\vartheta'\gve_1+(1-\vartheta')\gve_0-\gd_1}(\B).
\end{eqnarray*}
Summing up, we see that 
\begin{eqnarray*}X_q \hookrightarrow \cD(\underline\gD)\cap
\cH_p^{2+2\vartheta-\gd_0, \gg+\vartheta\gve_1+(1-\vartheta)\gve_0-\gd_1}(\B)
\end{eqnarray*}
for every $\vartheta$ with $0<\vartheta<1-2/q$.

For the extensions in Proposition \ref{extdelta} we  have 
$\cD(\underline \gD) \subseteq L^\infty(\B)$ and hence $X_q\subseteq L^\infty(\B)$. 
By \eqref{A},
$$\cD(A(v)) = \cD(\underline \gD^2),\quad v\in X_q.$$ 

\subsection{Bounded imaginary powers}

The following observation might be well-known. As we did
not find a reference, we include a proof: 

\begin{lemma}\label{l1}
Let $E$ be a Banach space and $A\in\mathcal{P}(\theta)$ with $\theta\geq\pi/2$. 
Then $A^{2}\in\mathcal{P}(\tilde \theta)$ for $\tilde\theta= 2\theta-\pi$ 
and $(A^{2})^{z}=A^{2z}$ for $z\in \C$.
\end{lemma}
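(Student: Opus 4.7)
The plan is to handle the two assertions separately. For $A^{2}\in\mathcal{P}(\tilde\theta)$, I would use the commuting factorization $A^{2}+w=(A-\mu)(A+\mu)$ on $\cD(A^{2})$, where $\mu$ is a square root of $-w$; this reduces the resolvent estimate for $A^{2}$ to the one already available for $A$. For the identity $(A^{2})^{z}=A^{2z}$, I would invoke the composition/spectral-mapping rule for the Dunford--Riesz functional calculus.

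Concretely, given $w\in S_{\tilde\theta}$ with $\tilde\theta=2\theta-\pi$, I would choose $\mu^{2}=-w$ with $\arg\mu\in[(\pi-\tilde\theta)/2,(\pi+\tilde\theta)/2]=[\pi-\theta,\theta]$. The hypothesis $\theta\ge\pi/2$ forces $\pi-\theta\ge-\theta$, so $|\arg\mu|\le\theta$, i.e.\ $\mu\in S_{\theta}$. Likewise $\arg(-\mu)=\arg\mu-\pi\in[-\theta,\theta-\pi]\subset[-\theta,\theta]$, so $-\mu\in S_{\theta}$ as well. Both resolvents $(A\pm\mu)^{-1}$ therefore exist with the uniform bound coming from $A\in\mathcal{P}(K,\theta)$; they commute, their composition maps $X$ into $\cD(A^{2})$, and
\begin{equation*}
(A^{2}+w)^{-1}=(A-\mu)^{-1}(A+\mu)^{-1},\qquad \|(A^{2}+w)^{-1}\|\le\frac{K^{2}}{(1+|\mu|)^{2}}\le\frac{K^{2}}{1+|w|},
\end{equation*}
where in the last step I use $1+|w|=1+|\mu|^{2}\le(1+|\mu|)^{2}$. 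This yields $A^{2}\in\mathcal{P}(K^{2},\tilde\theta)$.

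For $(A^{2})^{z}=A^{2z}$, the complex powers are defined by the Dunford--Riesz calculus for sectorial operators, as in Amann \cite{Am}, Sections 4.6--4.7. The statement is essentially a spectral-mapping identity: with $g(\lambda)=\lambda^{2}$ entire and $f(\eta)=\eta^{z}$ holomorphic on a sector containing $\sigma(A^{2})$, we have $f\circ g(\lambda)=\lambda^{2z}$, and the composition rule $(f\circ g)(A)=f(g(A))$ yields $A^{2z}=(A^{2})^{z}$. I would verify this first for $\Re z$ sufficiently negative, starting from the contour representations
\begin{equation*}
A^{2z}=\frac{1}{2\pi i}\int_{\Gamma}\lambda^{2z}(\lambda-A)^{-1}\,d\lambda,\qquad (A^{2})^{z}=\frac{1}{2\pi i}\int_{\tilde\Gamma}\eta^{z}(\eta-A^{2})^{-1}\,d\eta,
\end{equation*}
with $\Gamma$ around $\sigma(A)$ and $\tilde\Gamma$ around $\sigma(A^{2})$. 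Substituting the partial-fraction identity $(\eta-\lambda^{2})^{-1}=\tfrac{1}{2\sqrt\eta}\bigl[(\sqrt\eta-\lambda)^{-1}-(-\sqrt\eta-\lambda)^{-1}\bigr]$ into the resolvent of $A^{2}$ and changing variables $\eta=\lambda^{2}$ in the second integral turns it into the first. The identity then extends to arbitrary $z\in\C$ by analytic continuation and the group law $A^{z_{1}}A^{z_{2}}=A^{z_{1}+z_{2}}$.

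The main obstacle I anticipate is the branch-cut bookkeeping under the squaring map $\lambda\mapsto\lambda^{2}$: one must verify that the principal branch used to define $\lambda^{2z}$ via $A$ agrees with the one used to define $\eta^{z}$ via $A^{2}$, and that the deformation of $\tilde\Gamma$ to the image of $\Gamma$ under squaring is admissible (no stray residues, correct orientation on both components above and below the cut). Once these conventions are pinned down in a way compatible with the sectors $S_{\theta}$ and $S_{\tilde\theta}$, the identity reduces to a scalar change of variables in the contour integral.
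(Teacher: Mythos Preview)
Your proof of $A^{2}\in\mathcal{P}(\tilde\theta)$ via the factorization $(A^{2}+w)^{-1}=(A-\mu)^{-1}(A+\mu)^{-1}$ with $\mu^{2}=-w$ is correct and is exactly what the paper does (written there as $(A-i\sqrt\lambda)^{-1}(A+i\sqrt\lambda)^{-1}$).

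For $(A^{2})^{z}=A^{2z}$ your strategy is sound but differs from the paper's in the choice of integral representation. You propose the Dunford contour integrals around $\sigma(A)$ and $\sigma(A^{2})$ and a change of variables $\eta=\lambda^{2}$; you correctly flag the branch-cut and orientation bookkeeping as the delicate point. The paper avoids precisely this obstacle by working instead with Amann's \emph{real} representation
\[
(A^{2})^{-z}=\frac{\sin\pi z}{\pi}\int_{0}^{\infty}u^{-z}(A^{2}+u)^{-1}\,du,\qquad 0<\Re z<\tfrac12,
\]
inserts the same resolvent factorization you use, and performs the substitutions $\lambda=\pm i\sqrt u$ on the two resulting half-line integrals. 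This produces integrals along the imaginary axis that combine, after an elementary identity for $\sin\pi z$, into the standard contour formula for $A^{-2z}$; a key step is observing that $\int_{-i\infty}^{+i\infty}\lambda^{-2z}(A+\lambda)^{-1}\,d\lambda=0$ since $\lambda^{-2z}$ is holomorphic in the right half-plane. The restriction to $0<\Re z<\tfrac12$ guarantees convergence at both ends, and analytic continuation then gives all $z$. So both approaches rest on the same resolvent identity; the paper's route simply trades your two-sheeted contour deformation for a cleaner real-axis computation.
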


\begin{proof}
In view of the fact that $A^{-2z}$ and $(A^2)^{-z}$ are holomorphic operator
families for $\Re(z)>0$ we can confine ourselves to the case 
$0<\Re(z)<\frac{1}{2}$.

Let  $A\in\mathcal{P}(K,\theta)$ for $K\ge1$. 
The resolvent formula implies that
\begin{gather}\label{e444}
(A^{2}+\gl )^{-1}=(A-i\sqrt{\gl })^{-1}(A+i\sqrt{\gl })^{-1}=
\frac{1}{2i\sqrt{\gl }}\left((A-i\sqrt{\gl })^{-1}-(A+i\sqrt{\gl })^{-1}\right).
\end{gather}
We note that $\arg(\pm i\sqrt\gl)=\frac12\arg\gl\pm\frac12\pi$. 
Thus, for $\gl \in S_{\tilde\gt}$ with $|\gl|$ away of zero,
\begin{gather*}
(1+|\gl| )\|(A^{2}+\gl )^{-1}\|
\leq\frac{1+|\gl| }{2|\sqrt{\gl }|}\left(\|(A-i\sqrt{\gl })^{-1}\|+\|(A+i\sqrt{\gl })^{-1}\|\right)
\leq \tilde{K},
\end{gather*} 
for some $\tilde{K}>0$,
and hence $A^{2}\in\mathcal{P}(K',\tilde\theta)$ for some $K'\geq1$. 
Following Amann, cf.\ (III.4.6.9) in \cite{Am},  we let 
\begin{gather*}
(A^{2})^{-z}=
\frac{\sin\pi z}{\pi}\int_{0}^{+\infty}u^{-z}(A^{2}+u)^{-1}du , \quad
0<\Re(z)<\frac{1}{2}.
\end{gather*}
By (\ref{e444}) we find that
\begin{eqnarray*}
\lefteqn{(A^{2})^{-z}
=\frac{\sin\pi z}{\pi}
\int_{0}^{+\infty}\frac{u^{-z}}{2i\sqrt{u}}
\left((A-i\sqrt{u})^{-1}-(A+i\sqrt{u})^{-1}\right)du} \\
&=&\frac{\sin\pi z}{\pi}
\int_{0}^{-i\infty}(-\lambda^{2})^{-z}(A+\lambda)^{-1}d\lambda
+\frac{\sin\pi z}{\pi}
\int_{0}^{+i\infty}(-\lambda^{2})^{-z}(A+\lambda)^{-1}d\lambda \\
&=&-\frac{e^{i\pi z}-e^{-i\pi z}}{2\pi i}
\int_{-i\infty}^{0}(e^{i\pi}\lambda^{2})^{-z}(A+\lambda)^{-1}d\lambda
+\frac{e^{i\pi z}-e^{-i\pi z}}{2\pi i}
\int_{0}^{+i\infty}(e^{-i\pi}\lambda^{2})^{-z}(A+\lambda)^{-1}d\lambda\\
&=&\frac{1}{2\pi i}
\int_{-i\infty}^{0}(e^{i\pi}\lambda)^{-2z}(A+\lambda)^{-1}d\lambda
+\frac{1}{2\pi i}
\int_{0}^{+i\infty}(e^{-i\pi}\lambda)^{-2z}(A+\lambda)^{-1}d\lambda
-\frac{1}{2\pi i}
\int_{-i\infty}^{+i\infty}\lambda^{-2z}(A+\lambda)^{-1}d\lambda\\
&=&\frac{1}{2\pi i}
\int_{-i\infty}^{+i\infty}(-\lambda)^{-2z}(A+\lambda)^{-1}d\lambda=A^{-2z},
\end{eqnarray*}
where we have used the fact that 
\begin{gather*}
\int_{-i\infty}^{+i\infty}\lambda^{-2z}(A+\lambda)^{-1}d\lambda=0,
\end{gather*}
since $\lambda^{-2z}$ is holomorphic for $\Re(\lambda)>0$.
\end{proof}\medskip

In the following proposition, $\underline{\Delta}$ denotes 
the dilation invariant extension of the Laplacian  defined in Section \ref{choice}
and $A(u_0)$ is the operator defined in \eqref{A} with the choice 
$\underline\gD$ for the Laplacian.

\begin{proposition}\label{p1}
For every choice of  
$u_0\in L^{\infty}(\mathbb{B})$, $\phi>0$, and $\gt\in[0,\pi[$,
the operator $A(u_0)+c_0I$, 
considered as an unbounded operator in 
$\mathcal{H}^{0,\gamma}_{p}(\mathbb{B})$ with domain 
$\cD(\underline{\Delta}^{2})$ belongs to $\cP(\gt)\cap\mathcal{BIP}(\phi)$ 
for all sufficiently large $c_0>0$.
\end{proposition}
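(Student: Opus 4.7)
The plan is to decompose $A(u_0)+c_0 I$ as a principal part that inherits bounded imaginary powers from $\underline\Delta$ via Lemma \ref{l1}, plus a lower order perturbation, and then to invoke a standard perturbation theorem for the class $\mathcal{BIP}$. Under the choice of $\gamma$ from Sections \ref{2d}--\ref{4d}, Theorem \ref{elldomain2} together with Remark \ref{r1} yields that $c-\underline\Delta\in\cP(\theta')\cap\mathcal{BIP}(\phi')$ for every $\theta'\in[0,\pi[$ and every $\phi'>0$, provided $c>0$ is sufficiently large. Picking $\theta'\in\,]\pi/2,\pi[$ with $2\theta'-\pi\ge\theta$ and $\phi'\le\phi/2$, Lemma \ref{l1} then gives
\begin{gather*}
B:=(c-\underline\Delta)^2\in\cP(\theta)\cap\mathcal{BIP}(\phi),\qquad B^{it}=(c-\underline\Delta)^{2it},\qquad B^{1/2}=c-\underline\Delta,
\end{gather*}
with domain $\cD(B)=\cD(\underline\Delta^2)$.

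Expanding $(c-\underline\Delta)^2=c^2-2c\underline\Delta+\underline\Delta^2$ and recalling $A(u_0)u=\underline\Delta^2 u+(1-3u_0^2)\underline\Delta u$, I would write
\begin{gather*}
A(u_0)+c_0 I=B+P,\qquad P:=(1-3u_0^2+2c)\underline\Delta+(c_0-c^2)I.
\end{gather*}
Since $u_0\in L^\infty(\B)$, multiplication by $1-3u_0^2+2c$ is a bounded operator on $\cH^{0,\gamma}_p(\B)$. Combined with $\underline\Delta=cI-B^{1/2}$, this shows that $P$ is $B^{1/2}$-bounded, hence a lower order perturbation of $B$: by the standard interpolation inequality for fractional powers, for every $\varepsilon>0$ there exists $C_\varepsilon>0$ with
\begin{gather*}
\|Pu\|_{\cH^{0,\gamma}_p(\B)}\le\varepsilon\|Bu\|_{\cH^{0,\gamma}_p(\B)}+C_\varepsilon\|u\|_{\cH^{0,\gamma}_p(\B)}\qquad\text{for }u\in\cD(B).
\end{gather*}

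To conclude, I would invoke a standard $\mathcal{BIP}$ perturbation result of Pr\"uss--Sohr type (see e.g.~Amann \cite{Am}, Sections I.1.4 and III.4): if $B\in\cP(\theta)\cap\mathcal{BIP}(\phi)$ and $P$ is $B^\alpha$-bounded for some $\alpha\in[0,1)$, then $B+P+\lambda I\in\cP(\theta)\cap\mathcal{BIP}(\phi)$ for every $\lambda$ sufficiently large; absorbing $c_0-c^2$ into such a $\lambda$ by taking $c_0$ large yields the claim. The main obstacle is the precise verification of this perturbation result in our setting: one has to show that $\|P(B+\lambda)^{-1}\|<1$ uniformly for $\lambda\in S_\theta$ with $|\lambda|$ large, which combines the resolvent estimate $\|B^{1/2}(B+\lambda)^{-1}\|=O(|\lambda|^{-1/2})$ valid for $B\in\cP(\theta)$ with the boundedness of multiplication by $1-3u_0^2+2c$. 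This will then allow the resolvent of $B+P$ to be expanded by a Neumann series, and the Dunford-type integral defining $(B+P)^{it}$ to be estimated term by term, preserving the BIP angle $\phi$.
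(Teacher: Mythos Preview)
Your approach is correct and essentially coincides with the paper's: both start from $(c-\underline\Delta)^2\in\cP(\theta)\cap\mathcal{BIP}(\phi)$ via Lemma~\ref{l1} and then treat the first-order part as a relatively bounded perturbation, invoking Amann's perturbation theorem (III.4.8.5) for $\mathcal{BIP}$. The only difference is organizational: the paper performs two successive perturbations (first by $2c\underline\Delta$ to reach $\underline\Delta^2+\tilde c$, then by $h\underline\Delta$ with $h=1-3u_0^2$), explicitly verifying both the norm bound~(i) and the $L^1$-integrability condition~(ii) of Amann's theorem at each stage, whereas you collapse these into the single perturbation $P=(1-3u_0^2+2c)\underline\Delta+(c_0-c^2)I$ and sketch the Neumann-series argument---your ``main obstacle'' paragraph is precisely what the paper carries out in detail.
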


\begin{proof} By possibly increasing $\gt$ we may  assume that 
$\max\{\pi-\gt,\phi\}=\phi$. 
Theorem \ref{elldomain2} asserts that  
$A=c-\underline{\Delta}$ belongs to 
$\mathcal{P}(K,(\theta+\pi)/2)\cap\mathcal{BIP}(\phi/2)$ with suitable $K$, 
provided $c$ is large.
Now Lemma \ref{l1} implies that 
$
A^{2}\in\mathcal{P}(\theta)$ with
$(A^{2})^{z}=A^{2z}$ for $\Re(z)<0$
and hence  that 
$A^{2}\in\mathcal{BIP}(\phi)$.

Moreover, 
$A^{2}+\mu\in\mathcal{P}(\theta)$ for $\mu\geq0$. 
By Corollary III.4.8.6 in \cite{Am}, 
$A^{2}+\mu\in\mathcal{BIP}(\max\{\pi-\gt,\phi\})=
\mathcal{BIP}(\phi)$.
In order to obtain $\mathcal{BIP}$ for  $A(u_0)$, 
we apply a perturbation result, namely 
Theorem III.4.8.5 in \cite{Am}  for the perturbation 
$B=2c\underline{\Delta}$.
In accordance with the notation used there, we denote by   
$\Gamma(k,\psi)$  the negatively oriented boundary of 
\begin{gather*}
\{|\arg(z)|\leq \psi_{k}\}\cup\{|z|\leq 1/2k\}, \,\,\, 
\mbox{where} \,\,\, \psi_{k}=
\min\left\{\frac{\pi+\psi}{2},\mathrm{arcsin}\frac{1}{2k}\right\}.
\end{gather*}
We will have $(A^2+\mu)+B=\underline\gD^2+c^2+\mu
\in\mathcal{BIP}(\phi)$, if we can show that for suitable  
$0<\gb<1$ and $K_1\ge1$
\begin{enumerate}\renewcommand{\labelenumi}{({\roman{enumi})}}
\item $\|B(A^2+\mu+\gl)^{-1}\|\le \gb$ for all $\gl\in\gG\cup S_{\gt}$, 
where $\gG =\gG((1-\gb)^{-1}K_1,\gt)$ 
\item $(A^2+\mu+\gl)^{-1}B(A^2+\mu+\gl)^{-1}\in 
L^1(\gG,d\gl, \cL(\cH^{0,\gg}_p(\B)))$.
\end{enumerate}

Concerning (i): 
\begin{eqnarray*}
\lefteqn{\|B(A^2+\mu+\lambda)^{-1}\|
=2c
\|\underline{\Delta}((c-\underline{\Delta})^{2}+\mu+\lambda)^{-1}\|}\\
&=&2c
\|\underline{\Delta}(-\underline{\Delta}+c-i\sqrt{\mu+\lambda})^{-1}
  (-\underline{\Delta}+c+i\sqrt{\mu+\lambda})^{-1}\|\\
&\leq&
2c\|(-\underline{\Delta}+c-i\sqrt{\mu+\lambda})^{-1}\|
\|(-\underline{\Delta}+c+i\sqrt{\mu+\lambda}-(c+i\sqrt{\mu+\lambda}))
(-\underline{\Delta}+c+i\sqrt{\mu+\lambda})^{-1}\|\\
&\leq&
\frac{2cK}{1+|\sqrt{\mu+\lambda}|}
\|I-(c+i\sqrt{\mu+\lambda})(-\underline{\Delta}+c+i\sqrt{\mu+\lambda})^{-1}\|\\
&\leq&
\frac{2cK}{1+|\sqrt{\mu+\lambda}|}
\left(1+\frac{K|c+i\sqrt{\mu+\lambda}|}{1+|\sqrt{\mu+\lambda}|}\right)
\end{eqnarray*}
The last expression can be estimated by $\gb$ provided $\mu$ is taken  
sufficiently large.

Concerning (ii):  
\begin{eqnarray*}
\lefteqn{\|(A^2+\mu+\lambda)^{-1}B(A^2+\mu+\lambda)^{-1}\|
=\|((-\underline{\Delta}+c)^{2}+\mu+\lambda)^{-1}2c\underline{\Delta}
((-\underline{\Delta}+c)^{2}+\mu+\lambda)^{-1}\|}\\
&=&
2c\|(-\underline{\Delta}+c-i\sqrt{\mu+\lambda})^{-1}
(-\underline{\Delta}+c+i\sqrt{\mu+\lambda})^{-1}
\Big(-\underline{\Delta}+c-i\sqrt{\mu+\lambda}-(c-i\sqrt{\mu+\lambda})\Big)\\
&&(-\underline{\Delta}+c-i\sqrt{\mu+\lambda})^{-1}
(-\underline{\Delta}+c+i\sqrt{\mu+\lambda})^{-1}\|\\
&\leq& 
2c\Big(\frac{K}{1+|\sqrt{\mu+\lambda}|}\Big)^{3}
\Big(1+\frac{K|c-i\sqrt{\mu+\lambda}|}{1+|\sqrt{\mu+\lambda}|}\Big)
=O(|\mu+\lambda|^{-\frac{3}{2}})
\end{eqnarray*}
so that also (ii) holds.

We write $\tilde c=c^2+\mu$  for $\mu$ as above  
and apply once more Theorem III.4.8.5 in \cite{Am}, now with 
$\underline{\Delta}^{2}+\tilde{c}$ in the role of $A$ and 
$h\underline{\Delta}$ in the role of $B$ for an arbitrary $h\in L^\infty(\B)$.
Obviously, 
$\cD(h\underline\gD)\supseteq\cD(\underline\gD^2+\tilde c)$. 
From (\ref{e444})  we see that 
$\underline{\Delta}^{2}+\tilde{c}\in \mathcal{P}(\tilde K,\theta)$ 
for some $\tilde K\geq 1$. 
We now have to check the analogs of conditions (i) and (ii) above.
Similarly as before, we note concerning (i) that, by possibly increasing $\mu$,
\begin{eqnarray*}\nonumber
\lefteqn{
\|h\underline{\Delta}\left(\underline{\Delta}^{2}+\tilde{c}+\lambda\right)^{-1}\|
\leq
\|h\|_{\infty}\|\underline{\Delta}
\left(-\underline{\Delta}+i\sqrt{\tilde{c}+\lambda}\right)^{-1}\left(-\underline{\Delta}-i\sqrt{\tilde{c}+\lambda}\right)^{-1}\|}\\\label{e21}
&\leq&
\|h\|_{\infty}
\left\|\Big(I-i\sqrt{\tilde{c}+\lambda}\left(-\underline{\Delta}+i\sqrt{\tilde{c}+\lambda}
\right)^{-1}\Big)\left(-\underline{\Delta}-i\sqrt{\tilde{c}+\lambda}
\right)^{-1}\right\|\\
&\leq&
\|h\|_{\infty}
\left(1+\frac{K|\sqrt{\tilde{c}+\lambda}|}%
{1+|c-i\sqrt{\tilde{c}+\lambda}|}\right)
\frac{K}{1+|c+i\sqrt{\tilde{c}+\lambda}|}
\leq\beta,
\end{eqnarray*}
for 
$\lambda$ in $\Gamma((1-\beta)^{-1}K'',\theta)\cup S_{\theta}$, 
with $K''\geq \tilde K$ sufficiently large. 
Moreover, concerning (ii), 
\begin{eqnarray*}
\lefteqn{
\|(\underline{\Delta}^{2}+\tilde{c}+\lambda)^{-1}
h\underline{\Delta}(\underline{\Delta}^2+\tilde{c}+\lambda)^{-1}\|}\\
&\leq&
\|(\underline{\Delta}^{2}+\tilde{c}+\lambda)^{-1}\|
\|h\|_{\infty}\|\underline{\Delta}(\underline{\Delta}^{2}+\tilde{c}+\lambda)^{-1}\|\\
&\leq&
\|h\|_{\infty}
\|(-\underline{\Delta}-i\sqrt{\tilde{c}+\lambda})^{-1}
(-\underline{\Delta}+i\sqrt{\tilde{c}+\lambda})^{-1}\|\\
&&\left\|
\big(I+i\sqrt{\tilde{c}+\lambda}
(-\underline{\Delta}-i\sqrt{\tilde{c}+\lambda})^{-1}\big)
(-\underline{\Delta}+i\sqrt{\tilde{c}+\lambda})^{-1}
\right\|\\
&\leq&
\|h\|_{\infty}
\left(\frac{K}{1+|c-i\sqrt{\tilde{c}+\lambda}|}\right)^2
\left(1+\frac{K|\sqrt{\tilde{c}+\lambda}|}%
{1+|c+i\sqrt{\tilde{c}+\lambda}|}\right)
\frac{K}{1+|c+i\sqrt{\tilde{c}+\lambda}|}
=O(\lambda^{-\frac{3}{2}}),
\end{eqnarray*}
so that the operator is $L^1\left(\Gamma((1-\beta)^{-1}K'',\theta); 
\mathcal{L}(\mathcal{H}^{0,\gamma}_{p}(\mathbb{B}))\right)$.
Choosing $h=1-3u_{0}^{2}$ we get the result.
\end{proof}

\section{The Nonlinear Equation in $\cH^{0,\gg}_p(\B)$}

\subsection{The two-dimensional case.} Let $\dim\B=2$ and $\underline\gD^2$ 
the extension of the bilaplacian determined in Proposition \ref{dom}(a).

\begin{lemma}\label{4.1} Let $(x,y)$ be local coordinates near $\partial \B$.  
For $u\in X_q$ with $q>2$ we have $x\partial_x u$ and $\partial_{y}u$ in 
$\cH^{1+\gve,2+\gg}_p(\B)$ for sufficiently small $\gve>0$. They are therefore 
$L^\infty$-functions whenever $p\geq2$. 
\end{lemma}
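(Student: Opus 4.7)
\emph{Plan.} I would combine the direct-sum decomposition of \(\cD(\underline\gD)\) from Section \ref{2d} with the extra regularity for \(X_q\) derived in Section \ref{Ep}. Since \(X_q \hookrightarrow \cD(\underline\gD) = \cH^{2,2+\gg}_p(\B) \oplus \cE_{00}\) and in dimension two \(\cE_{00} = \go \otimes E_0 = \C\cdot\go\) (because \(E_0\) consists of constants), every \(u \in X_q\) decomposes uniquely as
\[u = u_1 + c\,\go, \qquad u_1 \in \cH^{2,2+\gg}_p(\B),\ c \in \C.\]
The contribution of \(c\go\) is harmless: \(\partial_y(c\go)=0\) and \(x\partial_x(c\go)=c\,x\go'(x)\in C_c^\infty(\B^\circ)\), since \(\go\equiv 1\) near \(x=0\). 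Both lie in every Mellin-Sobolev space, so the problem reduces to showing the claim for \(u_1\).

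From \(u_1 \in \cH^{2,2+\gg}_p(\B)\) one immediately gets \(x\partial_x u_1, \partial_y u_1 \in \cH^{1,2+\gg}_p(\B)\); what remains is to pick up a smoothness margin \(\gve>0\). For this I would invoke \eqref{embed}: because \(q>2\), one can fix \(\vartheta \in (0,1-2/q)\) and obtain
\[X_q \hookrightarrow \cH^{2+2\vartheta-\gd_0,\,\gs}_p(\B),\qquad\gs:=\gg+\vartheta\gve_1+(1-\vartheta)\gve_0-\gd_1.\]
The choice of \(\gg\) in Section \ref{2d} together with the fact that \(c\go\in\cH^{\infty,1-\gd}_p(\B)\) for every \(\gd>0\) (and \(\gs<1\)) ensures that \(u_1 = u - c\go\) inherits the extra smoothness, so
\[u_1 \in \cH^{2,2+\gg}_p(\B) \;\cap\; \cH^{2+2\vartheta-\gd_0,\gs}_p(\B).\]
Complex interpolation in the Mellin-Sobolev scale then yields \(u_1 \in \cH^{2+\gve,2+\gg}_p(\B)\) for some small \(\gve>0\); applying \(x\partial_x\) and \(\partial_y\), which preserve the weight and reduce smoothness by one, gives the conclusion for \(u_1\) and hence for \(u\). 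Finally, the \(L^\infty\) statement follows from Corollary \ref{c1}, since for \(p\geq 2\) we have \(1+\gve>2/p\) and the weight \(2+\gg>1=(n+1)/2\).

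\emph{Main obstacle.} The delicate step is the interpolation. A straightforward complex interpolation between \(\cH^{2,2+\gg}_p(\B)\) and \(\cH^{2+2\vartheta-\gd_0,\gs}_p(\B)\) with \(\gs<2+\gg\) gives smoothness \(2+\gve\) only at the cost of a slightly reduced weight \(2+\gg-\gd\). Recovering the exact weight \(2+\gg\) requires exploiting the elliptic structure of \(\underline\gD\) in the cone Mellin calculus — specifically, that the continuous projection \(\cD(\underline\gD)\to\cH^{2,2+\gg}_p(\B)\) onto the minimal part transfers additional smoothness of \(u\) in a lower-weight scale to additional smoothness of \(u_1\) in the natural minimal-domain scale \(\cH^{\cdot,2+\gg}_p(\B)\). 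In any event, the slight weight shift that would appear without this extra argument is absorbed harmlessly by the Sobolev embedding into \(L^\infty\).
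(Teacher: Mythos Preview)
Your approach differs from the paper's and, as you yourself flag, does not quite close. The paper sidesteps the weight-loss obstacle entirely by interpolating the \emph{operators} $x\partial_x$ and $\partial_y$ rather than the function $u$. From Section~\ref{Ep} one has $X_q \hookrightarrow [\cD(\underline\gD),\cD(\underline\gD^2)]_\vartheta$ for some $\vartheta>0$. The operators $x\partial_x$, $\partial_y$ map $\cD(\underline\gD)=\cH^{2,2+\gg}_p(\B)\oplus\cE_{00}$ boundedly into $\cH^{1,2+\gg}_p(\B)$ (the $\cE_{00}$-contribution vanishes near $\partial\B$ after differentiation, as you observe). They also map $\cD(\underline\gD^2)$ boundedly into $\cH^{3,2+\gg+\gve}_p(\B)$ for some $\gve>0$: the minimal part $\cH^{4,4+\gg}_p(\B)$ lands in $\cH^{3,4+\gg}_p(\B)$, and each asymptotics space $\tilde\cE_\rho$ with $\rho\in J=\ ]{-3-\gg},{-1-\gg}[$ is preserved by these derivatives and sits in $\cH^{\infty,2+\gg+\gve}_p(\B)$ since $-\rho>1+\gg$. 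Interpolating the operators, $x\partial_x$ and $\partial_y$ send $X_q$ into $[\cH^{1,2+\gg}_p(\B),\cH^{3,2+\gg+\gve}_p(\B)]_\vartheta\hookrightarrow\cH^{1+\gd,2+\gg+\gd}_p(\B)$ for some $\gd>0$; the weight $2+\gg$ (in fact a bit better) comes for free because it only \emph{improves} at the second endpoint.

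Your route---splitting off $c\,\go$ and then trying to upgrade the smoothness of $u_1$ while retaining the weight $2+\gg$---runs into precisely the difficulty you name: interpolating $\cH^{2,2+\gg}_p(\B)$ against a higher-smoothness but lower-weight space necessarily trades weight for smoothness, and the projection argument you sketch is not obviously available. Your fallback remark is correct: even $\cH^{1+\gve,2+\gg-\gd}_p(\B)$ with small $\gd>0$ still embeds in $L^\infty(\B)$ for $p\ge2$, so Corollary~\ref{c11} and the application would survive; but the lemma as stated---membership in $\cH^{1+\gve,2+\gg}_p(\B)$---is not established by your argument.
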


\begin{proof}We recall from Section \ref{Ep} that $X_q$ embeds into 
the interpolation space $[\cD(\underline\gD),\cD((\underline\gD)^2)]_\vartheta$ 
for some $\vartheta>0$. 
Since the functions in $E_0$ are locally constant, the functions in
$\cE_{00}$ are also locally constant near the boundary. 
Taking any derivative will result in a function which equals zero near the boundary.  
Hence, the operators $x\partial_x$ and $\partial_y$ map 
$\cD(\underline\gD)$ to $\cH_p^{1,2+\gg}(\B)$ and as they preserve the asymptotics of the functions in $\tilde{\mathcal{E}}_{\rho}$, with $\rho\in J$, they map $\cD(\underline\gD^2)$ to $\cH^{3,2+\gg+\varepsilon}_p(\B)$ for some $\varepsilon>0$. Thus, they map $X_q$ to $[\cH_p^{1,2+\gg}(\B),\cH_p^{3,2+\gg+\varepsilon}(\B)]_\vartheta$ for some $\vartheta>0$. The latter space embeds to $\cH_p^{1+\delta,2+\gg+\delta}(\B)$ for some $\delta>0$, which is a subset of $L^\infty(\B)$.
\end{proof}

\begin{corollary}\label{c11}
We infer from  \eqref{nabla} that for $u,v\in X_q$ supported near $\partial \B$, 
\begin{eqnarray*}
\lefteqn{\|(\nabla u,\nabla v)_g\|_{\cH^{0,\gg}_p(\B)}}\\
&\le &c_1
\max\{\|x\partial_x u\|_{L^\infty},\|\partial_{y}u\|_{L^\infty(\B)}\} 
\\
&&\ \ \times \max\{\|x^{-2}(x\partial_x v)\|_{\cH^{0,\gg}_p(\B)},
\|x^{-2}\partial_{y}v\|_{\cH^{0,\gg}_p(\B)}\}\le c_2\|u\|_{X_q}\|v\|_{X_q}
\end{eqnarray*}
with  suitable constants $c_1,c_2$.
\end{corollary}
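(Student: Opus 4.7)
The plan is to unwind \eqref{nabla} in the collar coordinates $(x,y)$ around $\partial\B$. Formula \eqref{nabla} writes $(\nabla u,\nabla v)_g$ as $x^{-2}$ times a bilinear combination, with smooth bounded coefficients $h^{ij}(y)$, of derivatives $Du,\,D'v$ where $D,D'\in\{x\partial_x,\partial_{y^1},\ldots,\partial_{y^n}\}$. For each summand of the form $(Du)(x^{-2}D'v)$, I would apply the pointwise H\"older inequality in the Mellin measure $\frac{dx}{x}dy$ defining the $\cH^{0,\gg}_p$-norm, obtaining
\[\|(Du)(x^{-2}D'v)\|_{\cH^{0,\gg}_p(\B)}\le\|Du\|_{L^\infty(\B)}\,\|x^{-2}D'v\|_{\cH^{0,\gg}_p(\B)}.\]
Summing the finitely many such contributions and absorbing $\max_{i,j}\|h^{ij}\|_\infty$ into a single constant $c_1$ yields the first displayed inequality.

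For the second inequality I would combine two ingredients. First, Lemma \ref{4.1} furnishes $\|x\partial_x u\|_{L^\infty(\B)},\,\|\partial_y u\|_{L^\infty(\B)}\le C\|u\|_{X_q}$. Second, inspection of Definition \ref{hsgamma} shows that multiplication by $x^{-2}$ is an isomorphism $\cH^{0,2+\gg}_p(\B)\to\cH^{0,\gg}_p(\B)$: the weight $x^{(n+1)/2-\gg}$ in the $\cH^{0,\gg}_p$-norm swallows the factor $x^{-2}$ by shifting the index $\gg\mapsto 2+\gg$. Hence
\[\|x^{-2}(x\partial_x v)\|_{\cH^{0,\gg}_p(\B)}=\|x\partial_x v\|_{\cH^{0,2+\gg}_p(\B)},\]
and analogously for $\partial_y v$. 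The proof of Lemma \ref{4.1} (via the embedding $X_q\hookrightarrow\cH^{1+\gve,2+\gg}_p(\B)$ after applying $x\partial_x$ or $\partial_y$) controls these $\cH^{0,2+\gg}_p$-norms by $\|v\|_{X_q}$, which combined with the $L^\infty$-bound yields the stated $c_2\|u\|_{X_q}\|v\|_{X_q}$ estimate.

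The main obstacle is, honestly, modest: it lies in the consistent bookkeeping of the weight index under multiplication by $x^{-2}$, and in verifying that the Mellin-admissible derivatives $x\partial_x$ and $\partial_y$ are exactly those for which Lemma \ref{4.1} provides $L^\infty$- and weighted Mellin-Sobolev control. Because the factor $x^{-2}$ precisely realizes the shift $\gg\mapsto 2+\gg$ and Lemma \ref{4.1} gives control in $\cH^{1+\gve,2+\gg}_p(\B)$, no additional analytic input beyond H\"older's inequality and the continuous embedding $\cH^{1+\gve,2+\gg}_p(\B)\hookrightarrow\cH^{0,2+\gg}_p(\B)$ is required.
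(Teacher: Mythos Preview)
Your proposal is correct and follows precisely the approach the paper intends: the corollary is stated without proof as an immediate consequence of formula \eqref{nabla} and Lemma \ref{4.1}, and your argument makes explicit exactly those two ingredients---the pointwise $L^\infty\times\cH^{0,\gg}_p$ H\"older estimate for each bilinear term, and the weight shift $\|x^{-2}w\|_{\cH^{0,\gg}_p}=\|w\|_{\cH^{0,2+\gg}_p}$ (for $w$ supported near $\partial\B$) combined with the bound $x\partial_x v,\partial_y v\in\cH^{1+\gve,2+\gg}_p(\B)$ from Lemma \ref{4.1}.
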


\begin{theorem}\label{one}
Let $\dim\B=2$, $q>2$ and $p\geq 2$.
Given any $u_0\in X_q$, there exists a $T>0$ and a unique solution
$$u\in L^q(0,T; \cD(\underline\gD^2))\cap W^{1}_q(0,T; \cH^{0,\gg}_p(\B))
\cap C([0,T],X_q)$$
solving Equation \eqref{e1a} on $]0,T[$ with initial condition \eqref{e1b}. 
\end{theorem}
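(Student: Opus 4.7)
The strategy is to apply the Cl\'ement--Li theorem (Theorem \ref{CL}) to the equivalent formulation \eqref{e3} of \eqref{e1a}--\eqref{e1b}, with the choices $X_0=\cH^{0,\gg}_p(\B)$, $X_1=\cD(\underline\gD^2)$, $q>2$, the nonlinear operator $A(v)u=\underline\gD^2u+\underline\gD u-3v^2\underline\gD u$, right-hand side $f(t,u)=F(u)=-6u(\nabla u,\nabla u)_g$ and $g\equiv0$. Since $X_0$ is UMD (hence $\zeta$-convex), it suffices to verify maximal regularity of $A(u_0)$ together with the Lipschitz-type hypotheses (H1) and (H2).

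\textbf{Maximal regularity of $A(u_0)$.} By Section \ref{Ep}, $X_q\hookrightarrow L^\infty(\B)$, so $u_0\in L^\infty(\B)$ and Proposition \ref{p1} applies: for sufficiently large $c_0>0$ we have $A(u_0)+c_0\in\cP(\gt)\cap\mathcal{BIP}(\phi)$ with $\phi<\pi/2$. Theorem \ref{DoreVenni} gives maximal regularity of $A(u_0)+c_0$, and shifting by a constant is harmless: the Clément--Li theorem still applies after replacing $F(u)$ by $F(u)+c_0u$, which does not affect the verification of (H1)--(H2).

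\textbf{Verification of (H1).} For $v,w$ in a bounded neighborhood $U\subseteq X_q$ of $u_0$, write
\[
(A(v)-A(w))u=-3(v^2-w^2)\underline\gD u=-3(v+w)(v-w)\underline\gD u.
\]
Using the Banach algebra estimate $\|fg\|_{X_0}\le\|f\|_{L^\infty}\|g\|_{X_0}$, the embedding $X_q\hookrightarrow L^\infty(\B)$, and $\|\underline\gD u\|_{X_0}\le C\|u\|_{X_1}$, we obtain
\[
\|A(v)u-A(w)u\|_{X_0}\le C(\|v\|_{X_q}+\|w\|_{X_q})\|v-w\|_{X_q}\|u\|_{X_1},
\]
so $A\in C^{1-}(U,\cL(X_1,X_0))$.

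\textbf{Verification of (H2).} Since $F$ does not depend on $t$, we need only show that $F$ is locally Lipschitz from $X_q$ to $X_0$. Writing
\[
F(u)-F(v)=-6(u-v)(\nabla u,\nabla u)_g-6v\bigl((\nabla(u-v),\nabla u)_g+(\nabla v,\nabla(u-v))_g\bigr),
\]
we estimate each term near $\partial\B$ by a cut-off splitting: away from the boundary the estimate is standard Sobolev multiplication, while close to the boundary we invoke Corollary \ref{c11}, which provides
\[
\|(\nabla \phi,\nabla \psi)_g\|_{X_0}\le C\|\phi\|_{X_q}\|\psi\|_{X_q}
\]
by virtue of Lemma \ref{4.1} (here using $p\ge2$ and $q>2$). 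Combined with $\|u-v\|_{L^\infty}\le C\|u-v\|_{X_q}$ and the Banach algebra property, this yields
\[
\|F(u)-F(v)\|_{X_0}\le C\bigl(\|u\|_{X_q}+\|v\|_{X_q}\bigr)^2\|u-v\|_{X_q},
\]
so $F\in C^{1-}(U,X_0)$, as required by (H2).

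\textbf{Conclusion.} All hypotheses of Theorem \ref{CL} being met, we obtain $T>0$ and a unique
\[
u\in L^q(0,T;X_1)\cap W^1_q(0,T;X_0)\cap C([0,T];X_q)
\]
solving \eqref{e3} on $]0,T[$ with $u(0)=u_0$, which is precisely \eqref{e1a}--\eqref{e1b}. The main obstacle is the verification of (H2): one must absorb the two derivatives appearing in $(\nabla u,\nabla u)_g$ into $X_q$-norms, which necessitates the refined interpolation estimate of Section \ref{Ep} together with Lemma \ref{4.1}; this is precisely where the restrictions $p\ge2$ and $q>2$ enter.
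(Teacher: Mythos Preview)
Your proof is correct and follows essentially the same route as the paper: you invoke Proposition \ref{p1} and Dore--Venni for maximal regularity, verify (H1) via the $L^\infty$-embedding of $X_q$ and the factorization $v^2-w^2=(v+w)(v-w)$, and verify (H2) via Corollary \ref{c11} and Lemma \ref{4.1}. The only cosmetic differences are that you make the shift-by-$c_0$ argument explicit (absorbing $c_0u$ into $F$) and mention the cut-off splitting near and away from the boundary, both of which the paper leaves implicit.
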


\begin{proof}
Given $\phi>0$ we know from Proposition \ref{p1} that $A(u_0)+c_0I$ has 
$\mathcal{BIP}(\phi)$ provided $c_0>0$ is large. Hence we have maximal 
regularity by Dore and Venni's theorem.
Next let us check conditions (H1) and (H2) in Clément and Li's theorem;
note that (H3) is not required for this particular equation.
Let $U $ be a bounded neighborhood of $u_0$ in  $X_q$. 
We noted in Section \ref{choice} that $U$ consists of bounded functions.

Concerning (H1): Let $u_1, u_2\in U$. Then
\begin{eqnarray*}
\lefteqn{\|A(u_{1})-A(u_{2})\|_{\mathcal{L}(X_{1},X_{0})}
=3\|(u_{1}^{2}-u_{2}^{2})\underline{\Delta}\|_{\mathcal{L}(X_{1},X_{0})}
\leq c\|(u_{1}^{2}-u_{2}^{2})I\|_{\mathcal{L}(\cD(\underline{\Delta}),X_{0})}}
\\
&\leq&c_{1}\|(u_{1}-u_{2})(u_{1}+u_{2})\|_\infty
\leq c_{2}(\|u_1\|_{X_q}+\|u_2\|_{X_q})\|u_1-u_2\|_{X_q}\le c_3\|u_1-u_2\|_{X_q} 
\end{eqnarray*}
for suitable constants $c_{1}$, $c_{2}$ and $c_{3}$, where the last inequality
is a consequence of the boundedness of $U$. 

Concerning (H2), we argue that 
\begin{eqnarray*}
\lefteqn{\|F(u_{1})-F(u_{2})\|_{X_{0}} 
= \|6u_1(\nabla u_{1},\nabla u_{1})_g
   -6u_{2}(\nabla u_{2},\nabla u_{2})_g\|_{X_{0}}
 }\\
 &\le&6\|u_{1}(\nabla u_{1},\nabla u_{1})_g
 -u_{2}(\nabla u_{1},\nabla u_{1})_g\|_{X_0}
 +6\|u_{2}(\nabla u_{1},\nabla u_{1})_g 
 -u_{2}(\nabla u_{1},\nabla u_{2})_g\|_{X_0}\\
&& +6\|u_{2}(\nabla u_{1},\nabla u_{2})_g
 -u_{2}(\nabla u_{2},\nabla u_{2})_g\|_{X_{0}} \\
 &\leq& 6(\|u_{1}-u_{2}\|_\infty\|(\nabla u_{1},\nabla u_{1})_g\|_{X_{0}}
 +6\|u_{2}\|_\infty \|(\nabla u_{1},\nabla (u_{1}-u_{2}))_g\|_{X_{0}}\\
 &&+6\|u_{2}\|_\infty \|(\nabla u_{2},\nabla (u_{1}-u_{2}))_g\|_{X_{0}}.
\end{eqnarray*}

According to Corollary \ref{c11}, we can estimate the right hand side by 
$c \|u_1-u_2\|_{X_q}(\|u_1\|_{X_q}+\|u_2\|_{X_q})^2$, which is bounded in view of 
the boundedness of $U$.
\end{proof}

\subsection{The higher-dimensional case.} Let $\dim\B\geq3$, $q>2$, 
$p\ge n+1$ and  $\underline\gD^2$ the extension determined in 
Proposition \ref{dom}(b).
As $z=0$ is a simple pole for the inverted Mellin symbol of the Laplacian, $\cE_0$ 
consists of bounded functions which are locally constant near the boundary. 
Hence $\cD(\underline\gD)$ embeds into $L^\infty(\B)$ and 
so does  $\cD(\underline\gD^2)\subseteq  \cD(\underline\gD) $.
By interpolation $X_q\hookrightarrow L^\infty(\B)$. We have the following analog of Lemma \ref{4.1}:

\begin{lemma}\label{grad}
Let $(x,y^1,...,y^n)$ be local coordinates near $\partial\B$, $q>2$
and $p\geq n+1$.  
Then $x\partial_x$ and $\partial_{y^j}$, $j\in\{1,...,n\}$, are bounded maps from  
$X_q$ to $\cH^{1+\gve,2+\gg}_p(\B)\hookrightarrow L^\infty(\B)$ for sufficiently small 
$\gve>0$.
\end{lemma}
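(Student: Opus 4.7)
The plan is to mirror the argument of Lemma \ref{4.1}, working separately on the summands of $\cD(\underline\gD)$ and $\cD(\underline\gD^2)$ and then interpolating. From Section \ref{Ep} we have $X_q\hookrightarrow[\cD(\underline\gD),\cD(\underline\gD^2)]_\vartheta$ for some small $\vartheta>0$, so it suffices to bound $x\partial_x$ and $\partial_{y^j}$ on the two endpoint spaces into Mellin-Sobolev spaces and interpolate. The $L^\infty$ step at the end is then forced by Corollary \ref{c1}, and the hypothesis $p\ge n+1$ is exactly what makes that corollary applicable with Sobolev index $1+\gve>(n+1)/p$.

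For $\cD(\underline\gD)=\cH^{2,2+\gg}_p(\B)\oplus\cE_0$, I would use that $\lambda_0=0$ has the constants as eigenspace, so functions in $\cE_0$ are locally constant near $\partial\B$ and are annihilated there by both $x\partial_x$ and $\partial_{y^j}$, leaving only a $C_c^\infty(\B^\circ)$ contribution. On the minimal-domain part these operators lose one order and preserve the weight, giving $x\partial_x,\partial_{y^j}:\cD(\underline\gD)\to\cH^{1,2+\gg}_p(\B)$. For $\cD(\underline\gD^2)=\cD(\Delta^2_{\min})\oplus\bigoplus_{\rho\in J}\tilde\cE_\rho\oplus\cE_0$, the minimal-domain part lies in $\cH^{4,4+\gg-\gve}_p(\B)$ (by \eqref{mindom}), hence maps into $\cH^{3,2+\gg+\gve_1}_p(\B)$ for some $\gve_1>0$ since $4+\gg$ leaves room. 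For a generator $x^\rho\log^l x\,\omega(x)e(y)$ of $\tilde\cE_\rho$, applying $x\partial_x$ gives $\rho x^\rho\log^l x+l x^\rho\log^{l-1}x$, which is still asymptotics of type $\rho$; since $\rho<\frac{n+1}{2}-\gg-2$, these belong to $\cH^{\infty,2+\gg+\gve_2}_p(\B)$ for some $\gve_2>0$, and $\partial_{y^j}$ behaves analogously. The $\cE_0$ summand is again handled trivially. Thus $x\partial_x,\partial_{y^j}:\cD(\underline\gD^2)\to\cH^{3,2+\gg+\gve_3}_p(\B)$ for some $\gve_3>0$.

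Interpolating, I obtain a bounded map $X_q\to[\cH^{1,2+\gg}_p(\B),\cH^{3,2+\gg+\gve_3}_p(\B)]_\vartheta$, which by the embedding into real interpolation plus Lemma 5.4 of \cite{CSS1} (as used in \eqref{embed}) lies in $\cH^{1+\gve,2+\gg}_p(\B)$ for sufficiently small $\gve>0$. Finally, since $p\ge n+1$ we have $(n+1)/p\le1<1+\gve$, so Corollary \ref{c1} applies and yields the pointwise bound $|u(x,y)|\le cx^{(2+\gg)-(n+1)/2}\|u\|$. Under the choices of Sections \ref{3d}--\ref{4d} we have $\gg\ge(n-3)/2$, so the exponent on $x$ is nonnegative, giving the desired embedding into $L^\infty(\B)$.

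The main technical step is tracking that the weight is genuinely improved from $\cD(\underline\gD)$ to $\cD(\underline\gD^2)$ — the asymptotic summands $\tilde\cE_\rho$ with $\rho\in J$ must not spoil this gain. This works precisely because $J$ sits strictly to the left of $\frac{n+1}{2}-\gg-2$, giving the strict inequality $\rho<\frac{n+1}{2}-\gg-2$ and hence a positive $\gve_2$; the rest is bookkeeping of the interpolation parameters.
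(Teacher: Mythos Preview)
Your proof is correct and follows exactly the approach the paper intends: the paper omits the proof of this lemma, presenting it as the direct analog of Lemma~\ref{4.1}, and you have faithfully supplied that analog, including the extra verification (needed here but not in the two-dimensional case) that $p\ge n+1$ gives $1+\gve>(n+1)/p$ and that $\gamma>(n-3)/2$ makes the exponent in Corollary~\ref{c1} nonnegative, yielding the $L^\infty$ embedding.
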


Then, similarly to Theorem \ref{one}, we prove

\begin{theorem}\label{3dim}
Let $\dim\B\geq3$, $q>2$, $p\geq n+1$ and $\underline\gD^2$ the extension 
determined in Proposition {\rm \ref{dom}(b)}.
Given any $u_0\in X_q$, there exists a $T>0$ and a 
unique solution
$$u\in L^q(0,T; \cD(\underline\gD^2))\cap W^{1}_q(0,T; \cH^{0,\gg}_p(\B))
\cap C([0,T],X_q)$$
solving Equation \eqref{e1a} on $]0,T[$ with initial condition \eqref{e1b}. 
\end{theorem}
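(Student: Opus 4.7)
The plan is to apply Clément and Li's theorem (Theorem \ref{CL}) following exactly the scheme of Theorem \ref{one}, with the only genuinely new ingredient being the higher-dimensional $L^\infty$-control supplied by the embedding $X_q\hookrightarrow L^\infty(\B)$ (already recorded in the discussion preceding the theorem) together with Lemma \ref{grad}. Condition (H3) is vacuous, since the forcing in \eqref{e1a} is zero, and Proposition \ref{p1} combined with Theorem \ref{DoreVenni} gives the maximal regularity of $A(u_0)$ for the pair $(X_1,X_0)$ as soon as $c_0>0$ is sufficiently large.

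Condition (H1) is handled as in the two-dimensional case. On a bounded neighborhood $U\subseteq X_q$ of $u_0$, and for $u_1,u_2\in U$, the embedding $X_q\hookrightarrow L^\infty(\B)$ yields
\begin{eqnarray*}
\|A(u_1)-A(u_2)\|_{\cL(X_1,X_0)}
&=& 3\|(u_1^2-u_2^2)\underline\gD\|_{\cL(X_1,X_0)}\\
&\le& C\,\|u_1+u_2\|_\infty\,\|u_1-u_2\|_\infty \le C'\,\|u_1-u_2\|_{X_q},
\end{eqnarray*}
exactly as in the proof of Theorem \ref{one}.

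For (H2) the crucial tool is Lemma \ref{grad}, which guarantees that $x\partial_x$ and $\partial_{y^j}$, $j=1,\ldots,n$, map $X_q$ boundedly into $L^\infty(\B)$. Combining this with the weight shift $\|x^{-2}w\|_{\cH^{0,\gg}_p(\B)}=\|w\|_{\cH^{0,\gg+2}_p(\B)}$ and formula \eqref{nabla}, one obtains the higher-dimensional analog of Corollary \ref{c11},
$$\|(\nabla u,\nabla v)_g\|_{X_0}\le c\,\|u\|_{X_q}\|v\|_{X_q},\qquad u,v\in X_q.$$
Writing $F(u_1)-F(u_2)=-6u_1(\nabla u_1,\nabla u_1)_g+6u_2(\nabla u_2,\nabla u_2)_g$ and applying the same telescoping estimate as in Theorem \ref{one}, together with the boundedness of $U$ in $X_q$ and the bound just displayed, one gets the Lipschitz estimate required for (H2). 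Theorem \ref{CL} then produces the asserted solution.

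The main obstacle, accordingly, is the $L^\infty$-bound on $x\partial_x u$ and $\partial_{y^j}u$ encoded in Lemma \ref{grad}: one needs an embedding of the form $\cH^{1+\gve,2+\gg}_p(\B)\hookrightarrow L^\infty(\B)$, which, via the analog of Corollary \ref{c1}, demands $1+\gve>(n+1)/p$. This is exactly why the hypothesis $p\ge n+1$ enters the statement of the theorem. Once Lemma \ref{grad} is established, the rest of the verification of (H1) and (H2) is a verbatim adaptation of the two-dimensional proof.
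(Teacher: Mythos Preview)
Your argument is correct and follows the same overall route as the paper: maximal regularity for $A(u_0)+c_0$ from Proposition~\ref{p1} via Dore--Venni, then verification of (H1) and (H2) in Cl\'ement--Li's theorem using the embedding $X_q\hookrightarrow L^\infty(\B)$ together with Lemma~\ref{grad}. The one place where you diverge slightly is in the estimate of $\|(\nabla u,\nabla v)_g\|_{X_0}$ needed for (H2). You argue via the direct higher-dimensional analog of Corollary~\ref{c11}, putting one set of derivatives in $L^\infty(\B)$ and the other in $\cH^{0,\gg+2}_p(\B)$ via the weight shift $\|x^{-2}w\|_{\cH^{0,\gg}_p}=\|w\|_{\cH^{0,\gg+2}_p}$, both facts following from $x\partial_x u,\partial_{y^j}u\in\cH^{1+\gve,2+\gg}_p(\B)$. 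The paper instead applies the pointwise decay bound of Corollary~\ref{c1} to these same derivatives to obtain $|x^2(\nabla u,\nabla v)_g|\le c\,x^{2(2+\gg-(n+1)/2)}\|u\|_{X_q}\|v\|_{X_q}$ near $\partial\B$, and then invokes the lower bound $\gg>(n-3)/2$ from Section~\ref{4d} explicitly to check that $\tilde x^{2(1+\gg-(n+1)/2)}\in\cH^{0,\gg}_p(\B)$. Both arguments are valid and rest on the same ingredients; yours is a little more streamlined, while the paper's makes visible exactly where the lower bound on $\gg$ enters the estimate.
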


\begin{proof}
The fact that $u_0$ is in $L^\infty$ yields maximal regularity for 
$A(u_0)+\tilde c$ by Proposition \ref{p1} and Dore and Venni's 
theorem for large  $\tilde c$.
So we only have to check conditions (H1) and (H2) in Clément and Li's 
theorem.
Let $U $ be a bounded neighborhood of $u_0$ in  $X_q$ and 
$\tilde x$ a function  which equals $x$ near $\partial\B$, is strictly positive on 
$\B^\circ$ and is $\equiv 1$ outside a neighborhood of $\partial \B$.

Concerning (H1): Let $u_1, u_2\in U$. Then
\begin{eqnarray*}
\lefteqn{\|A(u_{1})-A(u_{2})\|_{\mathcal{L}(X_{1},X_{0})}
=3\|(u_{1}^{2}-u_{2}^{2})\underline{\Delta}\|_{\mathcal{L}(X_{1},X_{0})}
\leq c\|(u_{1}^{2}-u_{2}^{2})I\|_{\mathcal{L}(\cD(\underline{\Delta}),X_{0})}}
\\
&\le& 
c\|(u_1+u_2)(u_1-u_2)I\|_{\mathcal{L}(\cD(\underline{\Delta}),\cH^{0,\gg}_p(\B))}
\le c_1 \|u_{1}+u_{2}\|_\infty \|u_{1}-u_{2}\|_\infty\\
&\leq&
 c_{2}(\|u_1\|_{X_q}+\|u_2\|_{X_q})\|u_1-u_2\|_{X_q}
\le c_3\|u_1-u_2\|_{X_q} 
\end{eqnarray*}
for suitable constants $c, c_{1}, c_{2},$ and $c_{3}$, 
where the last inequality is a consequence of the boundedness of $U$. 

Concerning (H2), we first deduce from Lemma \ref{grad} and Corollary \ref{c1} that 
for $u,v\in X_q$, we have near $\partial \B$ 
\begin{gather*}
|x^{2}(\nabla u,\nabla  v)_g|=|(x\partial_x u)(x\partial_x v)
+\sum h^{ij}(\partial_{y^i} u)(\partial_{y^j}v)|
\leq c_{4} x^{2(2+\gamma-\frac{n+1}{2})}\|u\|_{X_q}\|v\|_{X_q}
\end{gather*}
for some constant $c_{4}$. Noting that $\gg>\frac{n-3}2$ we can estimate
\begin{eqnarray*}
\lefteqn{\|F(u_{1})-F(u_{2})\|_{X_{0}} 
= \|6u_1(\nabla u_{1},\nabla u_{1})_g
   -6u_{2}(\nabla u_{2},\nabla u_{2})_g\|_{X_{0}}}\\
 &\le&6\|u_{1}(\nabla u_{1},\nabla u_{1})_g
 -u_{2}(\nabla u_{1},\nabla u_{1})_g\|_{X_0} +6\|u_{2}(\nabla u_{1},\nabla u_{1})_g 
 -u_{2}(\nabla u_{1},\nabla u_{2})_g\|_{X_0} 
 \\
&&   +6\|u_{2}(\nabla u_{1},\nabla u_{2})_g-u_{2}(\nabla u_{2},\nabla u_{2})_g\|_{X_{0}}
\\
&\leq& c_{5}\Big(\|\tilde x^{2(1+\gamma-\frac{n+1}{2})}(u_{1}-u_{2})\|_{\cH^{0,\gg}_p(\B)}
\|u_{1}\|_{X_q}\|u_{2}\|_{X_q}\\
 && +\|\tilde x^{2(1+\gamma-\frac{n+1}{2})}u_{2}\|_{\cH^{0,\gg}_p(\B)}
    (\|u_{1}\|_{X_{q}}+\|u_{2}\|_{X_{q}})\|u_{1}-u_{2}\|_{X_{q}}\Big)\\
 &\le& c_{5}\Big(\|\tilde x^{2(1+\gamma-\frac{n+1}{2})}\|_{\cH^{0,\gg}_p(\B)}\|u_{1}-u_{2}\|_{\infty}
\|u_{1}\|_{X_q}\|u_{2}\|_{X_q}\\
 && +\|\tilde x^{2(1+\gamma-\frac{n+1}{2})}\|_{\cH^{0,\gg}_p(\B)}\|u_{2}\|_{\infty}
    (\|u_{1}\|_{X_{q}}+\|u_{2}\|_{X_{q}})\|u_{1}-u_{2}\|_{X_{q}}\Big)\\
    &\le& c_{6}\Big( \|u_{1}\|_{X_q}\|u_{2}\|_{X_q}+\|u_{2}\|_{X_{q}}
    (\|u_{1}\|_{X_{q}}+\|u_{2}\|_{X_{q}})\Big)\|u_{1}-u_{2}\|_{X_{q}}
\end{eqnarray*}
with suitable constants $c_5,c_6$, and then use the fact 
that $U$ is bounded in $X_q$. 
\end{proof}

\section{The Allen-Cahn Equation}

We will now prove the existence of short time solutions to the Allen-Cahn equation \eqref{AC}, 
with the help of Theorem \ref{CL}. 
We choose $X_{0}=\cH^{0,\gg}_p(\B)$ and  $X_{1}=\cD(\underline \Delta)$
with one of the extensions determined in Theorem \ref{elldomain2}.  
Clearly $c-\underline{\Delta}\in\mathcal{BIP}(\phi)$ for any $\phi>0$, with $c>0$ sufficiently large, and the operator $A=\underline \Delta$ has maximal regularity for $(X_{1},X_{0})$ and any $q$. 
Moreover,  condition (H1) of Theorem \ref{CL} is trivially satisfied.

If we take any open set $U\in X_{q}$ and $u_{1},u_{2}\in U$, then we infer from 
the Lipschitz continuity of $f$ that
\begin{gather*}
\|f(u_{1})-f(u_{2})\|_{X_{0}}\leq c_{f}\|u_{1}-u_{2}\|_{X_{0}}
\leq c_{f}\|u_{1}-u_{2}\|_{X_{q}}
\end{gather*}
for the Lipschitz constant $c_{f}$. Hence,  condition (H2) of Theorem \ref{CL} 
is also satisfied, and we obtain the following result:

\begin{theorem}
Let $\underline{\Delta}$ be a closed extension of the Laplace operator in $\cH^{0,\gg}_p(\B)$ as in Theorem {\rm\ref{elldomain2}}. Then, for any
\begin{gather*}
u_{0}\in X_{q}=(\cH^{0,\gg}_p(\B),\mathcal{D}(\underline{\Delta}))_{1-\frac{1}{q},q},
\end{gather*} 
with $q\in\,]1,\infty[$, there exists a $T>0$ such that the problem \eqref{AC1}, \eqref{AC2} admits a unique solution 
\begin{gather*}
u\in L^q(0,T;\mathcal{D}(\underline{\Delta}))\cap W^{1}_q(0,T; \cH^{0,\gg}_p(\B))
\cap C([0,T],X_q).
\end{gather*}
\end{theorem}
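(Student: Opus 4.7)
The plan is to apply the Cl\'ement--Li theorem (Theorem \ref{CL}) directly, with $X_0 = \cH^{0,\gg}_p(\B)$ and $X_1 = \cD(\underline\Delta)$, regarding \eqref{AC1}--\eqref{AC2} as the instance of \eqref{e0.1} in which the operator $A(u) \equiv -\underline\Delta$ is independent of $u$, the nonlinearity is $f(t,u) = f(u)$, and the inhomogeneity vanishes, $g \equiv 0$.

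The first step is to secure maximal regularity for $A = -\underline\Delta$ on the pair $(X_1, X_0)$. By Theorem \ref{elldomain2}, the shifted operator $c - \underline\Delta$ lies in $\cP(0) \cap \mathcal{BIP}(\phi)$ for any prescribed $\phi > 0$ provided $c$ is large enough, and $\cH^{0,\gg}_p(\B)$ is UMD, hence $\zeta$-convex. Choosing $\phi < \pi/2$, Dore and Venni's theorem (Theorem \ref{DoreVenni}) then yields maximal regularity for $c - \underline\Delta$ on $(\cD(\underline\Delta), X_0)$ and every $q \in \,]1,\infty[\,$; by the shift principle recorded just after Theorem \ref{DoreVenni} --- absorb the linear term $-cu$ into the right-hand side by writing $\partial_t u + (c - \underline\Delta)u = f(u) + cu$ --- this gives maximal regularity for the original operator as well.

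Next I verify hypotheses (H1)--(H3) of Theorem \ref{CL}. Condition (H1) is trivial because $A$ does not depend on $u$, so it is a constant, hence smooth, map into $\cL(X_1, X_0)$. Condition (H3) is trivial because we take $g \equiv 0$. For (H2), let $U$ be any open neighborhood of $u_0$ in $X_q$; since $X_q \hookrightarrow X_0$ continuously and $f : \R \to \R$ is Lipschitz with constant $c_f$, we have
$$\|f(u_1) - f(u_2)\|_{X_0} \le c_f \|u_1 - u_2\|_{X_0} \le c \|u_1 - u_2\|_{X_q}$$
for all $u_1, u_2 \in U$, so $f \in C^{1-,1-}([0,T_0]\times U, X_0)$. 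Theorem \ref{CL} then delivers a $T > 0$ and a unique solution in the required regularity class. In contrast to the Cahn--Hilliard situation, no real obstacle arises here: the semilinear structure removes the $u$-dependence of the leading operator, and the scalar Lipschitz property of $f$ bypasses the delicate weighted-space estimates that dominated Sections 3 and 4.
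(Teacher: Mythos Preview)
Your proof is correct and follows essentially the same approach as the paper: apply Cl\'ement--Li with $X_0=\cH^{0,\gg}_p(\B)$, $X_1=\cD(\underline\Delta)$, obtain maximal regularity from Theorem~\ref{elldomain2} and Dore--Venni, note that (H1) and (H3) are trivial, and verify (H2) via the scalar Lipschitz bound $\|f(u_1)-f(u_2)\|_{X_0}\le c_f\|u_1-u_2\|_{X_0}\le c_f\|u_1-u_2\|_{X_q}$. The only cosmetic difference is your handling of the shift: the paper invokes the $v\mapsto e^{ct}v$ substitution to pass from $c-\underline\Delta$ to $-\underline\Delta$, whereas you absorb $cu$ into the right-hand side---if you take that route, note that (H2) must then be checked for $u\mapsto f(u)+cu$, which is of course immediate.
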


\begin{remark}\rm 
If $\dim\B\geq3$, by restricting the weight to $\gamma\in\ ]-\frac{n+1}{2},\min\{n-3,0\}]$ with $\gamma\neq \frac{n+1}{2}-q_{j}^{\pm}-2$, we can choose the extension $\cD(\underline{\Delta})=\cD(\Delta_{\min})=\mathcal{H}^{2,2+\gamma}_{p}(\mathbb{B})$. 
By Lemma {\rm 5.4} in \cite{CSS1} we then find that
\begin{gather*}
X_{q}=(X_{0},X_{1})_{1-\frac{1}{q},q}
\hookrightarrow(\cH^{0,\gg}_p(\B),\cH^{2,2+\gg}_p(\B))_{1-\frac{1}{q}-\gd,p}
\hookrightarrow \cH^{2(1-\frac{1}{q})-3\gd,2(1-\frac{1}{q})+\gg-3\delta}_p(\B)
\end{gather*} 
for any $\delta>0$. Moreover, for $q=p\le2$, Lemma {\rm 5.4} in \cite{CSS1} even shows that, for
arbitrary  $\delta>0$,
\begin{gather*}
X_{p}=(X_{0},X_{1})_{1-\frac{1}{p},p}
=(\cH^{0,\gg}_p(\B),\cH^{2,2+\gg}_p(\B))_{1-\frac{1}{p},p}
\hookrightarrow \cH^{2(1-\frac{1}{p}),2(1-\frac{1}{p})+\gg-\delta}_p(\B).
\end{gather*} 
\end{remark}

\end{document}